\newcommand{\zed}{\ensuremath{\mathbb Z}}
\newcommand{\eff}{\ensuremath{\mathbb F}}
\newtheorem{Theorem}{Theorem}[section]
\newtheorem{Definition}{Definition}[section]
\newtheorem{Example}{Example}[section]
\newtheorem{Remark}{Remark}[section]
\newtheorem{Lemma}[Theorem]{Lemma}
\newtheorem{Corollary}[Theorem]{Corollary}
\newcommand{\xx}{{\ensuremath{\mathbf{x}}}} % s-tuple
\newcommand{\yy}{{\ensuremath{\mathbf{y}}}} % s-tuple
\title{Rectangular, Range, and Restricted AONTs: Three Generalizations of All-or-Nothing Transforms}
\author{Navid Nasr Esfahani}
\author{Douglas R.\ Stinson\thanks{D.R.\ Stinson's research is supported by  NSERC discovery grant RGPIN-03882.
}}
\affil{David R.\ Cheriton School of Computer Science\\University of Waterloo\\
Waterloo, Ontario, N2L 3G1\\Canada}
\date{\today}
\begin{document}
\maketitle
\begin{abstract}
	All-or-nothing transforms (AONTs) were originally defined by Rivest \cite{R} as bijections from $s$ input blocks to $s$ output blocks such that no information can be obtained about any input block in the absence of any output block. Numerous generalizations and extensions of all-or-nothing transforms have been discussed in recent years, many of which are motivated by diverse applications in cryptography, information security, secure distributed storage, etc. 
In particular, $t$-AONTs, in which no information can be obtained about any $t$ input blocks in the absence of any $t$ output blocks, have received considerable study.
	
In this paper, we study three generalizations of AONTs that are motivated by applications due to Pham et al.\ \cite{PSC2019Optical} and Oliveira et al.\ \cite{OLVBM}. We term these generalizations rectangular, range, and restricted AONTs. Briefly, in a rectangular AONT, the number of outputs is greater than the number of inputs. A range AONT satisfies the $t$-AONT property for a range of consecutive values of $t$. Finally, in a restricted AONT, the unknown outputs are assumed to occur within a specified set of ``secure'' output blocks. We   
study existence and non-existence and provide examples and constructions for these generalizations. We also demonstrate interesting connections with combinatorial structures such as orthogonal arrays, split orthogonal arrays, MDS codes and difference matrices.
\end{abstract}

\section{Introduction}
\label{intro.sec}

Rivest \cite{R} defined all-or-nothing transforms in the setting of computational security as a mode of operation for block ciphers that can impede brute-force attacks. Stinson \cite{St} introduced and studied unconditionally secure all-or-nothing transforms, i.e., all-or-nothing transforms in the information-theoretic setting. Various generalizations of these transforms have been studied in recent years, including the following:
\begin{itemize}
\item almost AONTs (see \cite{ES17,DES,ZZWG}),
\item $t$-AONTS (see \cite{EGS,ES21_OSAONT,Linear2pp}), and
\item asymmetric AONTS  (see \cite{BastionAONT,ES21_AsymAONT}).
\end{itemize}

In this paper, we study three new types of AONTs motivated by applications due to Pham et al.~\cite{PSC2019Optical} and  Oliveira et al. \cite{OLVBM}. After introducing each of the generalizations, we study existence and non-existence, and provide examples and constructions.\footnote{These generalizations were first formally defined in the PhD thesis of the first author \cite{Thesis}.} We also demonstrate interesting connections with combinatorial structures such as orthogonal arrays, split orthogonal arrays, MDS codes and difference matrices.

We base all the generalizations in this paper on $(t,s,v)$-all-or-nothing transforms \cite{DES}, which are defined informally as follows. 

\begin{Definition}
	\label{def1}
	Suppose $s$ is a positive integer and $\phi : \Gamma^s \rightarrow \Gamma^s$, where $\Gamma$ is a finite set of size $v$ (called an \emph{alphabet}). Thus $\phi$ is a function that maps an input $s$-tuple 
	$\xx = (x_1, \dots  , x_s)$ to an
	output $s$-tuple 
	$\yy = (y_1, \dots  , y_s)$. 
	Suppose $t$ is an integer such that $1 \leq t \le s$.
	
	The function $\phi$ is a \emph{$(t,s,v)$-all-or-nothing transform} (or a $(t,s,v)$-AONT) provided that the following properties are
	satisfied:
	\begin{enumerate}
		\item  $\phi$ is a bijection.
		\item  If any $s - t$ of the $s$ outputs $y_1, \dots , y_s$ are fixed, then the values of any $t$ inputs  $x_1, \dots , x_s$ are completely undetermined.
	\end{enumerate}
\end{Definition}

It is convenient to define an all-or-nothing transform as a certain combinatorial structure. We recall the relevant combinatorial definitions (e.g., see \cite{EGS}) and then we briefly review the security provided by these combinatorial structures when they are used as AONTs. 

First, we require some preliminary definitions. An \emph{$(N,k,v)$-array} is an $N$ by $k$ array, say $A$, whose entries are elements chosen from an alphabet $\Gamma$ of order $v$.  
Suppose the columns of $A$ are
labeled by the elements in the set $C$.  
Let $D \subseteq C$, and define $A_D$ to be the array obtained from $A$
by deleting all the columns $c \notin D$.
We say that $A$ is
\emph{unbiased} with respect to $D$ if the rows of
$A_D$ contain every $|D|$-tuple of elements of $\Gamma$ 
exactly $N / v^{|D|}$ times.

We record the following lemma for future use.

\begin{Lemma}
\label{subset.lem}
Suppose that $A$ is an $(N,k,v)$-array that is unbiased with respect to the set (of columns)
$D$. Then $A$ is unbiased with respect to $D'$ whenever $D' \subseteq D$.
\end{Lemma}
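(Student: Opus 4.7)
The plan is a direct counting argument that leverages the unbiased property on the larger set $D$ to deduce it on the smaller set $D'$. The key observation is that projecting $A_D$ onto the columns indexed by $D'$ is the same as $A_{D'}$, so every row of $A_{D'}$ arises from some row of $A_D$ whose $D'$-coordinates agree with that row.

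Concretely, I would fix an arbitrary $|D'|$-tuple $\zz \in \Gamma^{|D'|}$ and count the number of rows of $A_{D'}$ equal to $\zz$. Each such row comes from a row of $A_D$ whose projection onto the columns in $D'$ equals $\zz$, and whose projection onto the remaining columns in $D \setminus D'$ is some tuple $\ww \in \Gamma^{|D|-|D'|}$. Partitioning these rows according to the value $\ww$, I would write
\begin{equation*}
\#\{\text{rows of } A_{D'} \text{ equal to } \zz\} \;=\; \sum_{\ww \in \Gamma^{|D|-|D'|}} \#\{\text{rows of } A_D \text{ equal to } (\zz,\ww)\},
\end{equation*}
where $(\zz,\ww)$ denotes the $|D|$-tuple obtained by combining $\zz$ (on $D'$) with $\ww$ (on $D \setminus D'$).

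By the hypothesis that $A$ is unbiased with respect to $D$, each inner count equals $N/v^{|D|}$. Since there are exactly $v^{|D|-|D'|}$ choices of $\ww$, the total is
\begin{equation*}
v^{|D|-|D'|} \cdot \frac{N}{v^{|D|}} \;=\; \frac{N}{v^{|D'|}},
\end{equation*}
which is independent of $\zz$, as required.

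There is essentially no obstacle: the proof is a one-line partition argument, and the only care needed is in matching indices between $D$, $D'$, and $D \setminus D'$ so that the identification of tuples is unambiguous. A brief remark noting that $N/v^{|D|}$ is an integer (guaranteed by the unbiased hypothesis on $D$) justifies that $N/v^{|D'|}$ is also an integer, so the conclusion is well-formed.
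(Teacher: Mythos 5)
Your proof is correct: the partition of the rows of $A_{D'}$ according to their values on the columns of $D \setminus D'$, combined with the count $N/v^{|D|}$ for each extended tuple, gives exactly $N/v^{|D'|}$ as required. The paper states this lemma without proof (treating it as immediate from the definition of unbiasedness), and your counting argument is precisely the standard justification it implicitly relies on, so there is nothing to correct.
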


Here is our combinatorial definition of an AONT.

\begin{Definition}
	\label{defunbiased}
	A \emph{$(t,s,v)$-all-or-nothing transform} is a $(v^s,2s,v)$-array, say $A$, with columns 
	labeled $1, \dots , 2s$, 
	that is unbiased with respect to the following subsets of columns:
	\begin{enumerate}
		\item $\{1, \dots , s\}$,
		\item $\{s+1, \dots , 2s\}$, and
		\item $I \cup J$, 
		for all $I \subseteq \{1,\dots , s\}$ with $|I| = t$ and all 
		$J \subseteq \{s+1,\dots , 2s\}$ with $|J| = s-t$.
	\end{enumerate}
\end{Definition}

We observe that a $(t,s,v)$-all-or-nothing transform $\phi$ corresponds to a $(v^s,2s,v)$-array $A$ in an obvious way.
For every input $s$-tuple $\xx = (x_1, \dots  , x_s)$, we create a row of $A$ consisting of
the $2s$ entries
\[ x_1, \dots  , x_s, y_1, \dots  , y_s,\]
where $(y_1, \dots  , y_s) = \phi(x_1, \dots  , x_s)$.
We call   $A$ the \emph{array representation} of the AONT $\phi$.

Let $\phi$ be a $(t,s,v)$-all-or-nothing transform and let $A$ be its array representation. Properties 1 and 2 of Definition \ref{defunbiased} say that $\phi$ is a bijection. Property 3 ensures that, if any $s-t$ outputs are fixed, then any $t$ inputs are undetermined.

The security properties of AONTs satisfying Definition \ref{defunbiased} are investigated in \cite{ES21_OSAONT}
from the standpoint of information theory. We assume an \textit{a priori} distribution on the $v^s$ possible input $s$-tuples such that every input occurs with positive probability. It is shown in \cite{ES21_OSAONT} that 
an AONT satisfying Definition \ref{defunbiased} has the property that any $t$ inputs take on any possible values
with positive probability, given the values of any $s-t$ outputs (this is termed \emph{weak security}). Furthermore, it is proven in \cite{ES21_OSAONT}
that the \textit{a posteriori} information about any $t$ inputs (given the values of any $s-t$ outputs) is equal to the \textit{a priori} information about the $t$ specified inputs if the input $s$-tuples are equiprobable (this is termed \emph{strong security}).

In the remainder of this paper, we will implicitly be treating AONTs as combinatorial objects that satisfy Definition \ref{defunbiased}.

%\subsection{Some Basic Properties of AONTs}

The following two results are immediate consequences of Definition \ref{defunbiased}.

\begin{Theorem}
	\label{inverse1.cor}\cite[Theorem 2.25]{Linear2pp}
	A mapping $\phi: \Gamma^s\to \Gamma^s$ is a $(t,s,v)$-AONT if and only if $\phi^{-1}$ is an $(s-t,s,v)$-AONT.
\end{Theorem}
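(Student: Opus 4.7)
The plan is to work entirely with the combinatorial description from Definition \ref{defunbiased} and exploit its built-in symmetry between the first $s$ columns (inputs) and the last $s$ columns (outputs).

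Let $A$ be the $(v^s,2s,v)$-array representing $\phi$, with columns labelled $1,\dots,s$ for inputs and $s+1,\dots,2s$ for outputs. Define $A'$ to be the array obtained from $A$ by swapping the block of columns $\{1,\dots,s\}$ with the block $\{s+1,\dots,2s\}$ (keeping their internal order). The first step is to observe that $A'$ is exactly the array representation of $\phi^{-1}$: a row of $A$ has the form $(\xx,\phi(\xx))$, and after swapping blocks it becomes $(\phi(\xx),\xx)$, which is $(\yy,\phi^{-1}(\yy))$ with $\yy=\phi(\xx)$. As $\xx$ ranges over $\Gamma^s$, so does $\yy$, so $A'$ lists every pair $(\yy,\phi^{-1}(\yy))$ exactly once, as required.

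The second step is to match the three unbiased conditions of Definition \ref{defunbiased} for $A'$ (with parameter $s-t$) to those for $A$ (with parameter $t$). Condition 1 for $A'$ asks that its first $s$ columns be unbiased; but those are the old columns $\{s+1,\dots,2s\}$ of $A$, so this is condition 2 for $A$. Similarly condition 2 for $A'$ is condition 1 for $A$. For condition 3 applied to $A'$, we need the array to be unbiased on $I' \cup J'$ for every $I' \subseteq \{1,\dots,s\}$ with $|I'|=s-t$ and $J' \subseteq \{s+1,\dots,2s\}$ with $|J'|=s-(s-t)=t$. Translating back through the column swap, $I'$ corresponds to a subset $J \subseteq \{s+1,\dots,2s\}$ of size $s-t$ in $A$, and $J'$ corresponds to a subset $I \subseteq \{1,\dots,s\}$ of size $t$ in $A$. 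These are precisely the index sets appearing in condition 3 for $A$ with parameter $t$.

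Since the correspondence is a bijection in both directions, $A$ satisfies the three conditions for a $(t,s,v)$-AONT if and only if $A'$ satisfies the three conditions for an $(s-t,s,v)$-AONT, which proves both implications simultaneously. No real obstacle is expected here; the only thing to be careful about is keeping track of which subset of columns plays which role after the swap, and in particular noting that $|J'|=s-(s-t)=t$ so the parameter really does flip from $t$ to $s-t$.
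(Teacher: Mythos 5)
Your proposal is correct and is exactly the paper's argument: the paper's one-line proof (``interchange the first $s$ and the last $s$ columns of the array representation of $\phi$'') is what you have spelled out in detail, including the check that the swapped array represents $\phi^{-1}$ and that condition 3 flips the parameter from $t$ to $s-t$.
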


\begin{proof}
Interchange the first $s$ and the last $s$ columns of the array representation of $\phi$.
\end{proof}

Our second result is an existence result phrased in terms of orthogonal arrays. 
An \emph{orthogonal array} OA$(s,k,v)$ is a $(v^s,k,v)$-array that is unbiased with respect to any subset of $s$ columns. 

\begin{Theorem}
	\label{OAthenAONT}\cite[Corollary 35]{DES}
	An OA$(s,2s,v)$ is a $(t,s,v)$-AONT
	for all $t$ such that $1 \leq t \leq s$.
\end{Theorem}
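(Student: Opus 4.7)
The plan is simply to verify each of the three unbiasedness conditions in Definition \ref{defunbiased} by direct appeal to the defining property of the orthogonal array. Let $A$ be an OA$(s,2s,v)$, so that $A$ is a $(v^s,2s,v)$-array unbiased with respect to every subset of $s$ columns.

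The key observation is that the three families of column subsets listed in Definition \ref{defunbiased} all have size exactly $s$. For (1), $\{1,\ldots,s\}$ has size $s$; for (2), $\{s+1,\ldots,2s\}$ has size $s$. For (3), since $I \subseteq \{1,\ldots,s\}$ and $J \subseteq \{s+1,\ldots,2s\}$, the sets $I$ and $J$ are disjoint, so $|I \cup J| = |I| + |J| = t + (s-t) = s$. Hence each of these subsets is an $s$-subset of the $2s$ columns of $A$, and by the OA property $A$ is unbiased with respect to each of them.

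Therefore all three conditions of Definition \ref{defunbiased} are satisfied for every $t$ with $1 \le t \le s$, so $A$ is a $(t,s,v)$-AONT for every such $t$. There is no real obstacle here; the argument is just the counting observation $|I| + |J| = s$, which reduces the AONT requirements to an instance of the OA hypothesis. Lemma \ref{subset.lem} is not even needed since the relevant subsets already have the right size.
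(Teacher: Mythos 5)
Your proof is correct: since every column set appearing in Definition \ref{defunbiased} has size exactly $s$ (in particular $|I \cup J| = t + (s-t) = s$ because $I$ and $J$ lie in disjoint column ranges), the OA$(s,2s,v)$ property immediately gives all the required unbiasedness conditions. The paper states this result with only a citation to \cite[Corollary 35]{DES} and gives no proof, and your argument is precisely the intended one-line verification, including the correct observation that Lemma \ref{subset.lem} is unnecessary here.
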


Suppose $q$ is a prime power and the alphabet is $\eff_q$. If every output of a $(t,s,v)$-AONT is an $\eff_q$-linear function of the inputs,  the AONT is a \emph{linear} $(t,s,q)$-AONT. We will write a linear 
$(t,s,q)$-AONT in the form $\yy =  \xx M^{-1}$, where $M$ is an invertible $s$ by $s$ matrix over $\eff_q$ (as always, $\xx$ is an input $s$-tuple and $\yy$ is an output $s$-tuple). Of course this is equivalent to saying  that $\xx = \yy M$.

\begin{Theorem}
\label{submatrix.thm}
	\cite[Lemma 1]{DES}
	Suppose $q$ is prime power and $M$ is an invertible $s$ by $s$ matrix with entries from $\eff_{q}$. Then $\yy =  \xx M^{-1}$ defines a linear $(t,s,q)$-AONT if and only if all $t$ by $t$ submatrices of $M$ are invertible.
\end{Theorem}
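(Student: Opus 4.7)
The plan is to work inside the combinatorial framework of Definition \ref{defunbiased} and reduce each unbiased condition to an invertibility statement about submatrices of $M$. First, because $M$ is invertible and $\xx \mapsto \yy = \xx M^{-1}$ is linear, as $\xx$ ranges over $\eff_q^s$ so does $\yy$; hence the array representation $A$ has $q^s$ rows of the form $(\xx,\yy) = (\yy M,\yy)$, and we may index the rows bijectively by $\yy$. This immediately settles items 1 and 2 of Definition \ref{defunbiased}: each of the column blocks $\{1,\dots,s\}$ and $\{s+1,\dots,2s\}$ lists every $s$-tuple in $\eff_q^s$ exactly once.

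Next, I would fix subsets $I \subseteq \{1,\dots,s\}$ with $|I|=t$ and $J \subseteq \{s+1,\dots,2s\}$ with $|J|=s-t$, identify $J$ with $J' \subseteq \{1,\dots,s\}$ via $j \mapsto j-s$, and set $\overline{J'} = \{1,\dots,s\} \setminus J'$ (of size $t$). Since $\xx = \yy M$, the coordinates of $\xx$ indexed by $I$ are linear in $\yy$, so the assignment
\[
L_{I,J'}\colon \yy \longmapsto \bigl((x_i)_{i\in I},\ (y_j)_{j\in J'}\bigr)
\]
is an $\eff_q$-linear map from $\eff_q^s$ to $\eff_q^s$. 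The unbiased condition for $I \cup J$ (which has total size $s$) demands that each $s$-tuple of entries appears $q^s/q^s = 1$ times among the rows, i.e.\ that $L_{I,J'}$ is a bijection.

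To analyze $L_{I,J'}$, I would split $\yy = (\yy_{\overline{J'}},\ \yy_{J'})$ and write
\[
(x_i)_{i\in I} \;=\; \yy_{\overline{J'}}\,M[\overline{J'},I] \;+\; \yy_{J'}\,M[J',I],
\]
where $M[R,C]$ denotes the submatrix of $M$ with row set $R$ and column set $C$. After a permutation of coordinates the matrix of $L_{I,J'}$ becomes
\[
\begin{pmatrix} M[\overline{J'},I] & 0 \\ M[J',I] & I_{s-t} \end{pmatrix},
\]
which is block lower triangular, so its determinant equals $\det M[\overline{J'},I]$. Thus item 3 of Definition \ref{defunbiased} for the pair $(I,J)$ is equivalent to invertibility of the $t \times t$ submatrix $M[\overline{J'},I]$. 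As $(I,J)$ ranges over all admissible pairs, the pair $(\overline{J'},I)$ ranges over all pairs of size-$t$ row and column index sets of $M$, yielding the stated equivalence.

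The only real technical step is the block-triangular determinant calculation, which is routine; the conceptual content is that unbiasedness of a size-$s$ column set in a linear array amounts to bijectivity of the corresponding linear map from $\eff_q^s$ to $\eff_q^s$, and this bijectivity localizes to the $t \times t$ submatrix of $M$ that couples the ``free'' outputs $\yy_{\overline{J'}}$ to the ``observed'' inputs $\xx_I$.
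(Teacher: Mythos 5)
Your argument is correct: reducing unbiasedness of each size-$s$ column set $I\cup J$ to bijectivity of the linear map $\yy\mapsto((x_i)_{i\in I},(y_j)_{j\in J'})$ and computing its determinant via the block-triangular form correctly identifies the condition as invertibility of the $t\times t$ submatrix $M[\overline{J'},I]$, and these submatrices exhaust all $t\times t$ submatrices of $M$ as $(I,J)$ varies. The paper itself gives no proof of this statement (it is quoted from \cite[Lemma 1]{DES}), and your proof is essentially the same linear-algebra argument used in that source, so nothing further is needed.
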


The next result is an immediate consequence of  Theorem \ref{inverse1.cor}.

\begin{Corollary}
	\label{inverse2.cor}\cite[Theorem 2.26]{Linear2pp}
	Suppose that $\yy = \xx M^{-1}$ defines a linear $(t,s,q)$-AONT. Then $\yy  = \xx M$ defines a linear $(s-t,s,q)$-AONT.
\end{Corollary}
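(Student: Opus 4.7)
The plan is to derive this directly from Theorem \ref{inverse1.cor} by identifying the inverse map explicitly. If $\phi : \eff_q^s \to \eff_q^s$ is the $(t,s,q)$-AONT defined by $\phi(\xx) = \xx M^{-1}$, then, since $M$ is invertible, the inverse function is obtained by right-multiplying by $M$; that is, $\phi^{-1}(\yy) = \yy M$ for every $\yy \in \eff_q^s$. Renaming the input variable of $\phi^{-1}$ as $\xx$, the map $\phi^{-1}$ is precisely $\xx \mapsto \xx M$.

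Next, I would apply Theorem \ref{inverse1.cor}, which asserts that $\phi$ is a $(t,s,q)$-AONT if and only if $\phi^{-1}$ is an $(s-t,s,q)$-AONT. Since $\phi$ is assumed to be a $(t,s,q)$-AONT, it follows that $\phi^{-1}$, i.e.\ the linear map $\xx \mapsto \xx M$, is an $(s-t,s,q)$-AONT. Because this map is clearly $\eff_q$-linear, it is in fact a \emph{linear} $(s-t,s,q)$-AONT, which is exactly the conclusion $\yy = \xx M$ of the statement.

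There is no real obstacle here: the whole content lies in recognizing that writing a linear AONT in the form $\yy = \xx M^{-1}$ is just a notational convention so that inversion corresponds cleanly to replacing $M^{-1}$ by $M$. Once this is observed, the corollary is a one-line consequence of Theorem \ref{inverse1.cor}. (As a sanity check, one could alternatively invoke Theorem \ref{submatrix.thm}: the claim $\yy = \xx M$ is a linear $(s-t,s,q)$-AONT amounts to saying all $(s-t) \times (s-t)$ submatrices of $M^{-1}$ are invertible, which is the linear-algebraic counterpart of the $(t,s,q)$-AONT hypothesis on $M$; but this more computational route is unnecessary.)
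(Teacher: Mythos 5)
Your proposal is correct and matches the paper's intent exactly: the paper presents this as an immediate consequence of Theorem \ref{inverse1.cor}, the point being precisely that the inverse of $\xx \mapsto \xx M^{-1}$ is $\xx \mapsto \xx M$, which is linear. Nothing further is needed.
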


Now, from Corollary \ref{inverse2.cor} and  Theorem \ref{submatrix.thm}, we obtain the following.

\begin{Corollary}
	\label{linear-st}\cite{Linear2pp}
	Suppose $M$ is an  invertible $s$ by $s$ matrix with entries from $\eff_q$. Then $\yy  = \xx M$ 
	defines a linear $(t,s,q)$-AONT if and only if every $(s-t)$ by $(s-t)$ submatrix of $M$ is invertible.
\end{Corollary}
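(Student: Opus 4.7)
The plan is to chain together Corollary \ref{inverse2.cor} with Theorem \ref{submatrix.thm}: the first lets us trade $M$ for $M^{-1}$ while swapping the parameter $t$ with $s-t$, and the second then characterizes the AONT property of $\yy = \xx M^{-1}$ via submatrices of $M$.

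Concretely, I would first observe that, although Corollary \ref{inverse2.cor} is stated as a one-way implication, applying it once with the matrix $M$ and once with $M^{-1}$ (using that $M$ is invertible and $(M^{-1})^{-1} = M$) produces the biconditional
\[ \yy = \xx M \text{ is a linear } (t,s,q)\text{-AONT} \iff \yy = \xx M^{-1} \text{ is a linear } (s-t,s,q)\text{-AONT.} \]
The same biconditional can also be read off directly from the iff in Theorem \ref{inverse1.cor}. Next, I would apply Theorem \ref{submatrix.thm} with $s-t$ substituted for $t$: the right-hand side above is equivalent to the condition that every $(s-t) \times (s-t)$ submatrix of $M$ is invertible. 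Concatenating the two equivalences yields the desired statement.

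There is no real obstacle here; both cited results are directly on point. The only subtlety is keeping track of the direction of Corollary \ref{inverse2.cor} as written, which is why the full biconditional must be obtained either by two applications (to $M$ and to $M^{-1}$) or by an appeal back to Theorem \ref{inverse1.cor}.
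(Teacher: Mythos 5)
Your proposal is correct and follows the paper's own route: the paper derives this corollary precisely by combining Corollary \ref{inverse2.cor} with Theorem \ref{submatrix.thm} (applied with $s-t$ in place of $t$). Your extra care in upgrading the one-way statement of Corollary \ref{inverse2.cor} to a biconditional via Theorem \ref{inverse1.cor} is a reasonable elaboration of the same argument, not a different approach.
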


%\subsection {Our Contribution (\textcolor{red}{It's relatively long, so I am not sure if it should be a subsection or not.})}
%All-or-nothing transforms are closely related to many different structures \cite{DES, EGS, Thesis, Linear2pp}. 

In the rest of this section, we will briefly discuss two applications that motivate our three generalizations of AONTs. 

%The first application adds loss-resilience properties to  AONTs, so that the input blocks can be recovered if a sufficiently small  number of output blocks are missing. In order to prevent this property interfering with the AONT properties, we need to add redundancy to the scheme. %In the linear case, the matrices representing the transform are rectangular. Hence we call them this generalization \emph{rectangular AONTs} (in  both the linear and non-linear cases). 
%This a requires the security of the AONT to hold over a range of values of $t$. Accordingly, it is called a \emph{range AONT}. Finally, we consider an application of AONTs where  the missing $t$ output blocks are chosen from a prespecified subset of the output blocks. These AONTs are termed \emph{restricted AONTs}.

%We will start with very basic and succinct introductions to the schemes by Oliveira et al.~\cite{OLVBM} and Pham et al.~\cite{PSC2019Optical}, which can be seen as precursors to the schemes we are going to present in section \ref{Generalizations}. In subsections \ref{sec:RecAONT}, \ref{sec:rangeAONT}, and \ref{sec:restAONT} we will discuss rectangular AONTs, ranged AONTs, and restricted t-AONTs, respectively. 

% our contirbution:
%The first generalization tries to expand the security of the scheme over a range of values of $t$. The next three of generalizations are motivated by variations of all-or-nothing transforms introduced by others.

%\subsection{Secure Erasure codes}
%\label{bg:Codes}
 Two of the AONT generalizations discussed in this paper are motivated by the work by Oliveira et al.~\cite{OLVBM}, where they considered both the confidentiality and the availability of  information distributed and stored on a cloud. More specifically, they studied linear erasure codes that can encode an $s$-tuple $X\in \eff^s_{q}$ to an $(s+n)$-tuple $Y\in \eff^{s+n}_{q}$, such that any $s$ symbols from $Y$ can be used to reconstruct $X$. Furthermore, for a positive integer $t
  \leq s$, no information can be obtained about any $t$ symbols of $X$ in the absence of any $n+t$ symbols of $Y$ (this is an ``AONT-like'' property.) This motivates our definition of a \emph{rectangular AONT} that we give in Section \ref{sec:RecAONT}.
 
The paper \cite{OLVBM} constructed the desired codes using a generator matrix that is an $s$ by $s+n$ super-regular matrix.\footnote{A matrix is \emph{super-regular} if all its square submatrices are invertible. The authors of  \cite{OLVBM} do not require the matrix entries to be nonzero, but we consider the case where all the $1$ by $1$ submatrices are invertible, i.e., the matrix entries are nonzero.} As indicated in \cite{OLVBM}, Cauchy matrices can be modified to obtain the desired generator matrices.

One consequence of this Cauchy matrix construction method is that the above-mentioned AONT-like property is satisfied for arbitrary values of $t$. In fact, Cauchy matrices provide bijections that are simultaneously $t$-AONTs for all possible relevant values of $t$, a fact that was noted explicitly
in \cite[Theorem 6]{EGS}. This motivates our definition of \emph{range AONTs} (which include the special case of \emph{strong AONTs}) that we give in Section \ref{sec:rangeAONT}.

%\subsection{Restricted AONT}
%\label{bg:Rest}
The second motivating application is due to Pham et al.~\cite{PSC2019Optical}, who studied the use of all-or-nothing transforms in the secure transmission of information across two channels, where one of the channels is using optical encryption to provide security. Their results are valid if the secure channel is achieved using another information theoretically secure scheme, for example, a one-time pad.

In this scenario, the message is broken into input blocks. Output blocks can be computed by applying the transform on the input blocks. Finally, the output blocks are divided into two disjoint subsets, where one of the subsets is of size $t$. The blocks in the $t$-subset are sent over the secure channel, while the other blocks are communicated via a public channel. Since we know which output blocks are transmitted over the secure channel, the all-or-nothing transform only needs to satisfy a weaker condition, namely that no information can be obtained about any input block as long as the output blocks that are sent over the secure channel are not available. Consequently, Pham et al.~\cite{PSC2019Optical} define \emph{restricted AONTs} so that they satisfy this condition. 
We investigate these AONTs further in Section \ref{sec:restAONT}.

\section{Rectangular AONTs}
\label{sec:RecAONT}
%As we mentioned in Section \ref{intro.sec}, rectangular AONTs are  motivated by the work of Oliveira et al.\ \cite{OLVBM}. The goal is to  protect the security of any $t$ input blocks if the adversary is given $s-t$ output blocks, while there are more than $s$ output blocks in order to provide resistance against block erasure. 

We formally define rectangular AONTs as follows.

\begin{Definition}
	\label{unbiased-rec.def}
	Suppose $s$, $n$, and $t$ are  positive integers, where $t \le s \le n$. 	
	A \emph{$(t,s,n,v)$-recAONT} is a $(v^s,s+n,v)$ array, with columns labeled $1, \dots, s+n$, that is unbiased with respect to the following sets of columns:
	\begin{enumerate}
		\item $\{1, \dots , s\}$
		\item any $J \subseteq \{s+1, \dots , s+n\}$ where $|J| = s$
		\item $I \cup J$, for any sets $I$ and $J$ where $I \subseteq \{1,\dots, s\}$, $|I| = t$,  $J \subseteq \{s+1, \dots , s+n\}$ 
		and $|J| = s-t$.
	\end{enumerate}
	When $n=s$, we have a $(t,s,v)$-AONT. 
\end{Definition}

The following result is a straightforward generalization of Theorem \ref{OAthenAONT}.

\begin{Theorem}
	\label{OAtorecAONT}
	An OA$(s,s+n,v)$, where $n \geq s$, is a $(t,s,n,v)$-recAONT for all $t$, $1 \leq t \leq s$.
\end{Theorem}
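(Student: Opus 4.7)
The plan is to simply verify that each of the three column subsets demanded by Definition \ref{unbiased-rec.def} has cardinality exactly $s$, and then invoke the defining property of an OA$(s,s+n,v)$, namely that it is unbiased with respect to every set of $s$ columns.

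First I would examine the three families of subsets in turn. Subset type 1 is $\{1,\dots,s\}$, which has size $s$ by inspection. Subset type 2 is any $J\subseteq\{s+1,\dots,s+n\}$ with $|J|=s$, again of size $s$. Subset type 3 is $I\cup J$ with $I\subseteq\{1,\dots,s\}$, $|I|=t$, and $J\subseteq\{s+1,\dots,s+n\}$, $|J|=s-t$; since the index ranges for $I$ and $J$ are disjoint, $I\cap J=\emptyset$, so $|I\cup J|=t+(s-t)=s$.

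Once this cardinality bookkeeping is in place, the conclusion is immediate: by the definition of an OA$(s,s+n,v)$, the array is unbiased with respect to every $s$-subset of its $s+n$ columns, and in particular with respect to each of the three kinds of subsets listed above. Hence all three conditions of Definition \ref{unbiased-rec.def} are fulfilled, and the array is a $(t,s,n,v)$-recAONT. Since this argument does not use $t$ beyond the inequality $1\le t\le s$, it gives the claim simultaneously for every admissible $t$.

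There is essentially no obstacle to this proof; the only substantive observation is that the union $I\cup J$ in condition 3 is disjoint and so has total size $s$, which is precisely why the OA hypothesis can be applied. I would not need Lemma \ref{subset.lem} here, although it could be invoked for aesthetic uniformity by first noting that the OA is unbiased with respect to the full $(s+n)$-column set would be wrong (too large), so the direct cardinality computation is the cleaner route.
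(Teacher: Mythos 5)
Your proof is correct and is exactly the argument the paper has in mind: the paper omits the proof, calling the result a straightforward generalization of Theorem \ref{OAthenAONT}, and the intended verification is precisely your observation that each of the three column sets in Definition \ref{unbiased-rec.def} has cardinality $s$ (the union in condition 3 being disjoint), so the OA$(s,s+n,v)$ property applies directly. No gap; the hypothesis $n \ge s$ is only needed so that the size-$s$ subsets $J$ in condition 2 exist, which your argument implicitly uses.
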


We now use Theorem \ref{OAtorecAONT} to give some interesting examples of recAONTs.
It is well-known that an OA$(2,k,v)$ is equivalent to a set of $k-2$ mutually orthogonal Latin squares (MOLS) of order $v$ (see, e.g., \cite{StBookCD}). Many results on MOLS can be found in the \emph{Handbook of Combinatorial Designs} \cite{CD}. These results also provide constructions of recAONTs with $s = 2$ for alphabet sizes that are not required to be a prime power.

For example, suppose we consider $k = 5$. It is well-known that an OA$(2,5,v)$ exists for all $v \geq 4$, $v \neq 6,10$ (see \cite[p.~126]{CD}). 
Hence, we have the following existence result for recAONT.

\begin{Corollary}
	Suppose $v \geq 4$, $v \neq 6,10$.
	Then there exists a $(t,2,3,v)$-recAONT for $t =1,2$.
\end{Corollary}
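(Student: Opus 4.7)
The plan is to obtain this result as an immediate consequence of Theorem \ref{OAtorecAONT} combined with the known existence of orthogonal arrays OA$(2,5,v)$. Specifically, I would instantiate Theorem \ref{OAtorecAONT} with $s=2$ and $n=3$, noting that the hypothesis $n \geq s$ becomes $3 \geq 2$, which is trivially satisfied. The theorem then asserts that any OA$(2, 2+3, v) = {}$OA$(2,5,v)$ is a $(t,2,3,v)$-recAONT for all $t$ with $1 \leq t \leq 2$, i.e., for $t=1$ and $t=2$ simultaneously.

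The only remaining task is to invoke the existence of an OA$(2,5,v)$ for the claimed values of $v$. I would cite the equivalence (already recalled in the surrounding text) between an OA$(2,k,v)$ and a set of $k-2$ mutually orthogonal Latin squares of order $v$; for $k=5$ this corresponds to $3$ MOLS of order $v$. The existence of three MOLS of order $v$ for every $v \geq 4$ with $v \neq 6, 10$ is a classical fact, quoted in the excerpt from \cite[p.~126]{CD}, and this is exactly what is needed.

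Putting the two facts together, for each $v \geq 4$ with $v \neq 6, 10$, choose any OA$(2,5,v)$ (which exists by the MOLS result), and then Theorem \ref{OAtorecAONT} guarantees that this array serves simultaneously as a $(1,2,3,v)$-recAONT and a $(2,2,3,v)$-recAONT. There is no genuine obstacle here; the only thing to be careful about is matching parameters correctly in the application of Theorem \ref{OAtorecAONT}, in particular verifying $s+n = 5$ and $1 \leq t \leq s = 2$. The proof is therefore essentially a one-line invocation of the two cited results.
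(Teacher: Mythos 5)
Your proposal is correct and matches the paper's own argument: the corollary is deduced exactly by combining the known existence of an OA$(2,5,v)$ for $v \geq 4$, $v \neq 6,10$ (equivalently, three MOLS of order $v$) with Theorem \ref{OAtorecAONT} applied with $s=2$, $n=3$. Nothing further is needed.
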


We now observe that OA$(2,k,v)$ are equivalent to certain recAONT.

\begin{Theorem}
	An OA$(2,k,v)$ is equivalent to a $(1,2,k-2,v)$-recAONT.
\end{Theorem}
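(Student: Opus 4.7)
The plan is to unpack both definitions with the parameters $t=1$, $s=2$, $n=k-2$ and check that the three families of column subsets appearing in Definition \ref{unbiased-rec.def} together exhaust all $2$-subsets of the $k$ columns. Once this is established, the equivalence is immediate, since an OA$(2,k,v)$ is precisely a $(v^2,k,v)$-array that is unbiased with respect to every $2$-subset of columns.

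More concretely, I would proceed as follows. First, note that a $(1,2,k-2,v)$-recAONT is by definition a $(v^2, k, v)$-array whose columns are labeled $1,\dots,k$, with columns $1,2$ designated as ``input'' columns and columns $3,\dots,k$ as ``output'' columns. With $s=2$ and $t=1$, the three conditions of Definition \ref{unbiased-rec.def} require the array to be unbiased with respect to: (1) the pair $\{1,2\}$; (2) every $2$-subset $J\subseteq\{3,\dots,k\}$; and (3) every pair $I\cup J$ with $I\subseteq\{1,2\}$, $|I|=1$ and $J\subseteq\{3,\dots,k\}$, $|J|=1$. The key observation is that every $2$-subset of $\{1,\dots,k\}$ falls into exactly one of these three categories, according to whether it contains zero, one, or two elements of $\{3,\dots,k\}$.

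Given this, the forward direction is handled by Theorem \ref{OAtorecAONT} (or directly: an OA$(2,k,v)$ is unbiased with respect to every pair, hence certainly the three specified families). For the reverse direction, the case analysis above shows that the three recAONT unbiased conditions together imply unbiasedness with respect to every pair of columns, which is exactly the definition of an OA$(2,k,v)$. The parameters match, since $(v^s, k, v) = (v^2, k, v)$ in both cases.

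There is no real obstacle here; the only thing to verify is the purely combinatorial statement that the families in conditions (1)--(3) partition $\binom{\{1,\dots,k\}}{2}$, which is a one-line check.
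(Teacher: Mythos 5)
Your proposal is correct and takes essentially the same route as the paper: the forward direction via Theorem \ref{OAtorecAONT} with $s=2$, $t=1$, and the converse by noting that conditions (1)--(3) of Definition \ref{unbiased-rec.def} together yield unbiasedness with respect to every pair of columns, which is the definition of an OA$(2,k,v)$. Your explicit case analysis (pairs containing zero, one, or two output columns) simply spells out the paper's one-line observation.
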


\begin{proof}
	Applying Theorem \ref{OAtorecAONT} with $s=2$, $t=1$, it follows that existence of an OA$(2,k,v)$ implies the existence of a $(1,2,k-2,v)$-recAONT. For the converse, we observe that the array representation of a $(1,2,k-2,v)$-recAONT is unbiased with respect to any two columns, and hence it is also the array representation of an OA$(2,k,v)$.
\end{proof}

%\subsection{Split orthogonal arrays}

We now discuss a connection between recAONT and split orthogonal arrays, which are structures defined by Levenshtein \cite{Leven}. A \emph{split orthogonal array} SOA$(t_1,t_2;s_1,s_2;v)$ is a
$(v^{t_1+t_2}, s_1+s_2,v)$-array $A$ that satisfies the following two properties:
\begin{enumerate}
\item the columns of $A$ are partitioned into two sets, of sizes $s_1$ and $s_2$, and 
\item $A$ is unbiased with respect to any set of $t_1 + t_2$ columns, where $t_1$ columns are chosen from the first set of columns and $t_2$ columns are chosen from the second set of columns. 
\end{enumerate}

The following result due to Bill Martin (private communication) is a straightforward consequence of Definition \ref{unbiased-rec.def}.

\begin{Theorem}
	Suppose there exists a $(t,s,n,v)$-recAONT.  Then there exists an SOA$(t,s-t;s,n; v)$.
\end{Theorem}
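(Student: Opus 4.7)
The plan is to observe that this follows essentially by matching definitions, so the ``proof'' is really just a bookkeeping exercise showing that the array representation of the recAONT is already an SOA under the natural partition of its columns.

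First I would let $A$ be the $(v^s, s+n, v)$-array guaranteed by the $(t,s,n,v)$-recAONT, with columns labeled $1, \dots, s+n$. I would then take as the required partition the obvious one: the ``first set'' consists of columns $\{1,\dots,s\}$ (of size $s_1 = s$) and the ``second set'' consists of columns $\{s+1, \dots, s+n\}$ (of size $s_2 = n$). Since $t_1 + t_2 = t + (s-t) = s$, the number of rows $v^s$ matches the requirement $v^{t_1+t_2}$ in the definition of a split orthogonal array.

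Next I would verify the unbiased condition. An SOA$(t, s-t; s, n; v)$ must be unbiased with respect to every column set of the form $I \cup J$ with $I \subseteq \{1,\dots,s\}$, $|I|=t$, and $J \subseteq \{s+1,\dots,s+n\}$, $|J|=s-t$. But this is precisely property 3 in Definition \ref{unbiased-rec.def}, so the condition is inherited directly from the recAONT.

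There is essentially no obstacle: the statement is a definitional re-packaging. The only thing worth double-checking is that the parameters align (in particular that $v^{t_1+t_2} = v^s$ and that the split into blocks of size $s$ and $n$ matches the natural block structure of the recAONT). Hence no additional combinatorial argument is required, and properties 1 and 2 of the recAONT are not needed for this implication.
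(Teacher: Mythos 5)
Your proposal is correct and is essentially the same as the paper's proof: both simply take the array representation with the natural column split into blocks of sizes $s$ and $n$, note that $v^{t+(s-t)}=v^s$ gives the right number of rows, and observe that property 3 of Definition \ref{unbiased-rec.def} is exactly the SOA unbiasedness condition. Your added remark that properties 1 and 2 of the recAONT are not needed is consistent with the paper's argument.
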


\begin{proof}
	From Theorem \ref{unbiased-rec.def} we know that a $(t,s,n,v)$-recAONT is equivalent to a $(v^s, s+n, v)$-array, that is unbiased with respect to $I\cup J$, for any sets $I$ and $J$ where $I \subseteq \{1,\dots, s\}$, $|I| = t$,  $J \subseteq \{s+1, \dots , s+n\}$ 
	and $|J| = s-t$. If we set $n_1 = s, n_2 = n, t_1 = t$, and $t_2 = s-t$, then from the definition of split orthogonal arrays, such an array is an SOA$(t,s-t; s, n; v)$.
\end{proof}

Hence, from a design theoretic perspective, rectangular AONTs are structures ``between'' orthogonal arrays and split orthogonal arrays, in the sense that existence of a suitable orthogonal array implies the existence of a certain recAONT, which in turn implies the existence of a certain split orthogonal array.

Similar to the other types of AONT structures discussed so far, a recAONT is \emph{linear} if its outputs are a linear combination of its inputs. We write a linear recAONT in the form $\yy = \xx N$, where $N$ is an $s$ by $n$ matrix that satisfies certain properties, as given in the following theorem.

\begin{Lemma}
	\label{linearRecAONT}
	Suppose that $q$ is a prime power and $N$ is an $s$ by $n$ matrix with entries from $\eff_q$. Then $\yy =  \xx N$  defines a linear $(t,s,n,q)$-recAONT if and only if the following conditions are satisfied:
	\begin{enumerate}
		\item every $s$ by $s$ submatrix of N is invertible, and
		\item every $(s-t)$ by $(s-t)$ submatrix of $N$ is invertible.
	\end{enumerate}
\end{Lemma}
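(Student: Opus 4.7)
The plan is to check each of the three unbiased conditions in Definition \ref{unbiased-rec.def} separately, rephrasing each as a statement about linear bijectivity, which in turn translates into an invertibility condition on submatrices of $N$.

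First, observe that Condition 1 in the definition is automatic for \emph{any} linear map $\yy = \xx N$: as $\xx$ ranges over $\eff_q^s$ (these are the rows of the array representation), the first $s$ entries of each row simply enumerate $\eff_q^s$, so the array is trivially unbiased with respect to $\{1,\dots,s\}$. Thus no condition on $N$ arises from this part.

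Next I would handle Condition 2. Pick any $s$-subset $J\subseteq\{s+1,\dots,s+n\}$ and let $N_J$ denote the $s\times s$ submatrix of $N$ consisting of the columns indexed by $J$ (shifted to $\{1,\dots,n\}$). Restricted to these $s$ output columns, the map sending the input $\xx$ to its row in the array is the linear map $\xx\mapsto \xx N_J$ from $\eff_q^s$ to $\eff_q^s$. Being unbiased with respect to $J$ means every $s$-tuple in $\eff_q^s$ appears $v^s/v^s=1$ time, i.e.\ the map is a bijection, which is equivalent to $N_J$ being invertible. As $J$ ranges over all $s$-subsets of the output columns, $N_J$ ranges over all $s\times s$ submatrices of $N$, giving Condition 1 of the lemma.

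The main content is Condition 3. Fix $I\subseteq\{1,\dots,s\}$ with $|I|=t$ and $J\subseteq\{s+1,\dots,s+n\}$ with $|J|=s-t$. Let $\bar I=\{1,\dots,s\}\setminus I$, so $|\bar I|=s-t$. The columns of the array indexed by $I\cup J$ evaluated at input $\xx$ give $(x_I,\xx N_J)$, where $N_J$ now denotes the $s\times(s-t)$ submatrix of $N$ picked out by $J$. This defines a linear map $\eff_q^s\to\eff_q^s$ whose matrix can be written as the $s\times s$ block $[E_I\mid N_J]$, where $E_I$ is the $s\times t$ matrix projecting onto coordinates in $I$. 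Reorder the rows so that rows indexed by $I$ precede rows indexed by $\bar I$; this yields the block form
\[
\begin{pmatrix} I_t & N_{I,J}\\ 0 & N_{\bar I,J}\end{pmatrix},
\]
whose determinant equals $\det(N_{\bar I,J})$. So the map is bijective (equivalently, the array is unbiased with respect to $I\cup J$) if and only if the $(s-t)\times(s-t)$ submatrix $N_{\bar I,J}$ is invertible. As $I$ ranges over all $t$-subsets of $\{1,\dots,s\}$, the complement $\bar I$ ranges over all $(s-t)$-subsets of the rows of $N$; and $J$ ranges over all $(s-t)$-subsets of the columns of $N$. Therefore Condition 3 of the definition is equivalent to Condition 2 of the lemma, completing the proof.

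The only mildly tricky step is the block-triangularization used for Condition 3; everything else is bookkeeping. Combining the three conditions gives the claimed equivalence, and I would remark that this recovers Corollary \ref{linear-st} in the square case $n=s$, where the single $s\times s$ submatrix condition collapses to ``$N$ is invertible'' and Condition 2 of the lemma becomes the usual linear $(t,s,q)$-AONT criterion.
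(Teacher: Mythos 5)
Your proof is correct, but it takes a genuinely different route from the paper. The paper's proof is a reduction: it notes that unbiasedness with respect to any $s$ output columns amounts to invertibility of every $s \times s$ submatrix, and then, for the third unbiasedness condition, restricts $N$ to an arbitrary set of $s$ columns $N'$, observes that $\yy' = \xx N'$ is then a square linear $(t,s,q)$-AONT, and invokes Corollary \ref{linear-st} (itself resting on the cited square-case criterion of Theorem \ref{submatrix.thm} and the inverse correspondence of Theorem \ref{inverse1.cor}) to conclude that every $(s-t) \times (s-t)$ submatrix of $N'$, hence of $N$, must be invertible. You instead verify all three unbiasedness conditions of Definition \ref{unbiased-rec.def} directly: the first is automatic, the second is the bijectivity of $\xx \mapsto \xx N_J$ for $|J| = s$, and the third is handled by writing the map $\xx \mapsto (x_I, \xx N_J)$ in the block form $\bigl(\begin{smallmatrix} I_t & N_{I,J} \\ 0 & N_{\bar I,J} \end{smallmatrix}\bigr)$ after a row permutation, so that bijectivity is equivalent to invertibility of $N_{\bar I,J}$; ranging over $I$ and $J$ then gives exactly the $(s-t) \times (s-t)$ condition. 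Your block-triangularization is the standard argument behind the square-case criterion, so your proof is self-contained and in fact reproves Theorem \ref{submatrix.thm}/Corollary \ref{linear-st} along the way, whereas the paper's argument is shorter because it leans on that machinery (and is stated rather tersely, essentially giving only one direction explicitly); your version also makes both directions of the equivalence transparent.
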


\begin{proof}
	Clearly property 1 in Definition \ref{unbiased-rec.def} is satisfied if and only if every $s$ by $s$ submatrix of $N$ is invertible.
	We prove that property 2 holds if and only if every $(s-t)$ by $(s-t)$ submatrix of $N$ is invertible.
	
	Let $N'$ be a matrix consisting of any $s$ columns of $N$. Then $\yy' = \xx N'$ is  a $(t,s,v)$-AONT. 
	Therefore, from Corollary \ref{linear-st}, any $(s-t)$ by $(s-t)$ submatrix of $N'$ is invertible. 
\end{proof}

%\newpage

\section{Range and Strong AONTs}
\label{sec:rangeAONT}
In this section, we will study \emph{range AONTs}, where the AONT provides the desired security properties for a continuous range of values for $t$, i.e., for $t_1 \leq t \leq t_2$, for specified integers $t_1$ and $t_2$. In particular, if the range consists of all positive integers not exceeding a given integer $t$, we call the AONT a \emph{strong} AONT. %these structures match those of Oliveira et al.\ \cite{OLVBM}.
Here is the formal definition, which first appeared in \cite{Thesis}.

\begin{Definition}
	\label{unbiased-range.def}
	%\cite{DES}
	Suppose $s$, $t_1$, and $t_2$ are positive integers, where $t_1 \le t_2 \le s$. 
	A \emph{$([t_1 , t_2], s,v)$-rangeAONT} is a $(v^s,2s,v)$ array, with columns labeled $1,\dots ,2s$, that is unbiased with respect to the following sets of columns:
	\begin{enumerate}
		\item $\{1, \dots , s\}$,
		\item $\{s+1, \dots , 2s\}$,
		\item $I \cup J$, for any sets $I$ and $J$ where $I \subseteq \{1,\dots, s\}$, $|I| = t$ and $t_1\le t \le t_2$,  $J \subseteq \{s+1, \dots , 2s\}$, and $|J| = s-t$.
	\end{enumerate} 
\end{Definition}

Thus, a $(t,s,v)$-AONT is the exactly the same as a $([t,t], s,v)$-rangeAONT. 
The following lemma is an immediate consequence of the definition.

\begin{Lemma}
\label{range.lem}
Suppose $t_1 \le t_1' \leq t_2' \leq t_2 \le s$. Then a $([t_1 , t_2], s,v)$-rangeAONT 
is also a $([t_1' , t_2'], s,v)$-rangeAONT.
\end{Lemma}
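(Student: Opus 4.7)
The plan is to observe that the definition of a $([t_1,t_2],s,v)$-rangeAONT is a conjunction of unbiasedness conditions indexed by the value of $t$ (along with two conditions independent of $t$, namely unbiasedness with respect to $\{1,\dots,s\}$ and $\{s+1,\dots,2s\}$). Shrinking the interval from $[t_1,t_2]$ to $[t_1',t_2']$ simply removes some of the indexed conditions, so the larger range property implies the smaller one.

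More concretely, first I would fix a $([t_1,t_2],s,v)$-rangeAONT, call its array $A$, and verify each of the three requirements of Definition \ref{unbiased-range.def} for the parameters $[t_1',t_2']$. Conditions (1) and (2) of the definition for $[t_1',t_2']$ are identical to those for $[t_1,t_2]$ and hold by hypothesis. For condition (3), I would take any $t$ with $t_1' \le t \le t_2'$ and any admissible $I,J$ with $|I|=t$ and $|J|=s-t$; because $[t_1',t_2'] \subseteq [t_1,t_2]$ this $t$ also lies in $[t_1,t_2]$, so the required unbiasedness of $A$ with respect to $I \cup J$ follows directly from condition (3) of the $([t_1,t_2],s,v)$-rangeAONT hypothesis.

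There is no real obstacle here: the statement is a pure monotonicity observation about the indexing set of conditions in Definition \ref{unbiased-range.def}. The proof amounts to one sentence noting the inclusion $[t_1',t_2'] \subseteq [t_1,t_2]$, and (if desired) Lemma \ref{subset.lem} is not even needed, since we are not passing to subsets of the column set $I \cup J$ but only restricting the allowed values of $t$.
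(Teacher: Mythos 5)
Your proof is correct and matches the paper's reasoning: the paper gives no explicit proof, stating only that the lemma is an immediate consequence of Definition \ref{unbiased-range.def}, and your observation that shrinking the interval $[t_1,t_2]$ to $[t_1',t_2']$ merely removes some of the unbiasedness conditions in item (3) is exactly that argument. Your remark that Lemma \ref{subset.lem} is not needed is also accurate.
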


\begin{Definition}
	\label{StrAONT}
	A \emph{$(t,s,v)$-strong AONT} is a  $([1,t],s,v)$-rangeAONT.
\end{Definition}

The following corollary is an immediate consequence of Lemma \ref{range.lem}.

\begin{Corollary}
A \emph{$(t,s,v)$-strong AONT} is a  $([t_1,t_2],s,v)$-rangeAONT if
$1 \leq t_1 \leq t_2 \leq t$.
\end{Corollary}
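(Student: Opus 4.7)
The plan is to derive this Corollary directly by unpacking the definition of a strong AONT and then invoking Lemma \ref{range.lem}. There is no substantive combinatorial content to establish; the work has already been done in Lemma \ref{range.lem}, and the Corollary is essentially a relabeling.

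First, I would apply Definition \ref{StrAONT} to replace the hypothesis: a $(t,s,v)$-strong AONT is, by definition, a $([1,t],s,v)$-rangeAONT. So the claim reduces to showing that a $([1,t],s,v)$-rangeAONT is also a $([t_1,t_2],s,v)$-rangeAONT whenever $1 \le t_1 \le t_2 \le t$.

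Next, I would invoke Lemma \ref{range.lem} with the outer range $[1,t]$ (i.e., taking $t_1 = 1$ and $t_2 = t$ in the notation of that lemma) and inner range $[t_1, t_2]$ (i.e., taking $t_1' = t_1$ and $t_2' = t_2$ in the lemma's notation). The chain of inequalities required by the lemma, namely $1 \le t_1 \le t_2 \le t \le s$, is immediate from the Corollary's hypothesis together with $t \le s$ (which holds because $(t,s,v)$-AONTs are only defined for $t \le s$, a condition inherited through Definition \ref{StrAONT} and Definition \ref{unbiased-range.def}).

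I do not expect any obstacle: the argument is a one-line chain of two citations. The only minor subtlety is noting that $t \le s$, which is part of the background hypotheses on strong AONTs and ensures that Lemma \ref{range.lem} applies. Once that is observed, the conclusion follows without any further computation.
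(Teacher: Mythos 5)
Your proposal is correct and matches the paper's own justification: the paper states the Corollary as an immediate consequence of Lemma \ref{range.lem}, which is exactly the unpacking of Definition \ref{StrAONT} followed by an application of that lemma with outer range $[1,t]$ and inner range $[t_1,t_2]$ that you describe.
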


We should note that $(t,s,v)$-AONTs are not automatically strong. For example, the optimal linear $(2,p,p)$-AONTs (which exist for all primes $p$) constructed in \cite{EGS,Linear2pp} are not strong. This is because the relevant matrices $M$ contain $0$ entries and hence they are not $(1,p,p)$-AONTs.

The next result  follows from Theorem \ref{OAthenAONT}.

\begin{Theorem}
	\label{OAtoStrongAONT}
	An OA$(s,2s,v)$ is an $(s,s,v)$-strong AONT.
\end{Theorem}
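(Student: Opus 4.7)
The plan is to reduce this directly to Theorem \ref{OAthenAONT}. That theorem already tells us that an OA$(s,2s,v)$ is a $(t,s,v)$-AONT for \emph{every} $t$ in the range $1\leq t\leq s$. Since Definition \ref{unbiased-range.def} of a $([1,s],s,v)$-rangeAONT is the conjunction over all $t \in \{1,\dots,s\}$ of the unbiased conditions that define a $(t,s,v)$-AONT (items 1 and 2 of Definition \ref{defunbiased} are the same as items 1 and 2 of Definition \ref{unbiased-range.def}, and item 3 of Definition \ref{unbiased-range.def} is just item 3 of Definition \ref{defunbiased} quantified over $t$), the result follows immediately. By Definition \ref{StrAONT}, a $([1,s],s,v)$-rangeAONT is an $(s,s,v)$-strong AONT, completing the argument.

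Alternatively, one can give a self-contained proof by noting that all three conditions in Definition \ref{unbiased-range.def} (for $t_1=1$, $t_2=s$) ask for the array to be unbiased with respect to a set of exactly $s$ columns. Indeed, $\{1,\dots,s\}$ and $\{s+1,\dots,2s\}$ have size $s$, and for any admissible $I,J$ in condition 3 we have $I\cap J = \emptyset$ with $|I\cup J| = t+(s-t) = s$. Since the defining property of an OA$(s,2s,v)$ is precisely that it is unbiased with respect to every $s$-subset of its $2s$ columns, all conditions are satisfied.

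There is no real obstacle here; the statement is essentially a definitional observation once Theorem \ref{OAthenAONT} is in hand. The proof can be written in one or two sentences.
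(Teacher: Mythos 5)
Your proposal is correct and follows essentially the same route as the paper, which simply notes that the result follows from Theorem \ref{OAthenAONT} together with the definitions of range and strong AONTs. Your self-contained alternative (every required column set has size exactly $s$, and an OA$(s,2s,v)$ is unbiased with respect to every $s$-subset of columns) is also valid and is in fact the same observation underlying Theorem \ref{OAthenAONT} itself.
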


Similar to the case of $t$-AONTs, we define a \emph{linear range AONT} as a range AONT such that each output element is a linear function of the input elements. We write a linear range AONT in the form  $\yy =\xx M^{-1}$, where $M$ is an $s$ by $s$ invertible matrix. 

\begin{Theorem}
	Suppose that $q$ is a prime power and $M$ is an invertible $s$ by $s$ matrix with entries from $\eff_q$. Then $\yy =\xx M^{-1}$ is a $([t_1,t_2], s,q)$-rangeAONT if and only if all $t$ by $t$ submatrices of $M$ are invertible, for all $t$ such that $t_1 \le t \le t_2$.
\end{Theorem}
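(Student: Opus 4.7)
The plan is to reduce the rangeAONT condition to the collection of $(t,s,q)$-AONT conditions for $t$ ranging over $[t_1,t_2]$, and then invoke Theorem \ref{submatrix.thm} for each such $t$.

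First I would observe that comparing Definition \ref{unbiased-range.def} with Definition \ref{defunbiased}, a $([t_1,t_2],s,q)$-rangeAONT is, by construction, nothing more than an array that is simultaneously a $(t,s,q)$-AONT for every integer $t$ with $t_1 \le t \le t_2$: the first two unbiasedness conditions (on $\{1,\dots,s\}$ and $\{s+1,\dots,2s\}$) are common to both definitions, and the third condition of Definition \ref{unbiased-range.def} is precisely the union, over $t \in [t_1,t_2]$, of the third condition in Definition \ref{defunbiased}. Thus $\yy = \xx M^{-1}$ defines a $([t_1,t_2],s,q)$-rangeAONT if and only if it defines a $(t,s,q)$-AONT for every $t \in [t_1,t_2]$.

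Next I would apply Theorem \ref{submatrix.thm} to each such $t$ individually. Since $M$ is invertible by hypothesis, Theorem \ref{submatrix.thm} states that, for each fixed $t$, $\yy = \xx M^{-1}$ is a linear $(t,s,q)$-AONT iff every $t \times t$ submatrix of $M$ is invertible. Quantifying over $t \in [t_1,t_2]$ and combining with the equivalence from the previous paragraph immediately yields the biconditional claimed in the theorem.

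There is essentially no hard step here: the proof is a direct unpacking of Definition \ref{unbiased-range.def} together with a straightforward application of Theorem \ref{submatrix.thm}. The only small thing worth being careful about is making sure that Theorem \ref{submatrix.thm}'s standing assumption that $M$ is invertible is preserved, but this is given in the hypothesis and corresponds exactly to the bijectivity conditions~1 and~2 of Definition \ref{unbiased-range.def}.
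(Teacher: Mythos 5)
Your proof is correct, and it is exactly the argument the paper has in mind: the paper states this theorem without proof, treating it as immediate from unpacking Definition \ref{unbiased-range.def} into the simultaneous $(t,s,q)$-AONT conditions and applying Theorem \ref{submatrix.thm} for each $t \in [t_1,t_2]$, which is precisely what you do.
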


We give some small examples of linear $(2,p,p)$-strong AONTs. The defining matrices are invertible, they have no 0 entries, and all 2 by 2 submatrices are invertible.

\begin{Example}
		\label{range3}
	A linear $(2,2,3)$-strong AONT:
	\[
	\left(\begin{array}{c c }
		1 & 1  \\
		1 & 2 
	\end{array}
	\right).
	\]
\end{Example}

\begin{Example}
		\label{range5}
	A linear $(2,3,5)$-strong AONT:
	\[
	\left(\begin{array}{c c c }
		1 & 1 & 1 \\
		1 & 2 & 3  \\
		1 & 3 & 4  
	\end{array}
	\right).
	\]
\end{Example}

\begin{Example}
		\label{range7}
	A linear $(2,5,7)$-strong AONT:
	\[
	\left(\begin{array}{c c c c c}
		1 & 1 & 1 & 1 & 1 \\
		1 & 2 & 3 & 4 & 5 \\
		1 & 3 & 4 & 5 & 6 \\ 
		1 & 4 & 5 & 6 & 2 \\
		1 & 5 & 6 & 2 & 4
	\end{array}
	\right).
	\]
\end{Example}

\begin{Example}
		\label{range9}
	A linear $(2,6,9)$-strong AONT, where
	$\eff_9 = \zed_3[x] / (x^2 + 1)$:
\[
\left(\begin{array}{c c c c c c}
1 & 1 & 1 & 1 & 1 & 1 \\
1 & 2 & x & x+1 & x+2 & 2x \\
1 & x & 2 & 2x & 2x+1 & x+1 \\
1 & x+1 & 2x+2 & x+2 & 2x & 2x+1 \\
1 & x+2 & 2x & x & 2x+2 & 2 \\
1 & 2x & x+2 & 2 & x+1 & x
\end{array}
\right).
\] 

\end{Example}

An $s$ by $s$ Cauchy matrix over $\eff_q$ exists  if  $q \ge 2s$.  These matrices are constructed as follows. Let $r_1, \dots , r_s$, $c_1, \dots , c_s$ be $2s$ distinct elements of $\eff_q$. Then the matrix $M = (m_{ij})$ defined by $m_{ij} = 1/(r_i - c_j)$ is a Cauchy matrix. 
All square submatrices of an $s\times s$ Cauchy matrix are invertible. Hence, any $s\times s$ Cauchy matrix over $\eff_q$ is an $(s, s,q)$-strong AONT.

For fixed positive integers $t_1,t_2$ with $t_1 \leq t_2$, and any for prime power $q$,
define \[\mathcal{S}_R([t_1,t_2],q) = \{ s: \text{there exists a linear } ([t_1,t_2],s,q)\text{-rangeAONT}  \}.\]

\begin{Lemma}
\label{lower.lem}
Suppose that $q \geq 2t_2$. Then $\lfloor \frac{q}{2}\rfloor \in \mathcal{S}_R([t_1,t_2],q)$
\end{Lemma}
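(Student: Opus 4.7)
The plan is to simply invoke the Cauchy matrix construction that the authors have just recalled in the paragraph immediately preceding the lemma. Setting $s := \lfloor q/2 \rfloor$, the key observation is that $s$ is, by definition, the largest integer for which $2s \leq q$, so an $s \times s$ Cauchy matrix over $\eff_q$ exists.

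Concretely, I would carry out the following steps. First, verify that the parameters $(t_1, t_2, s)$ are consistent with Definition 3.1: we need $1 \leq t_1 \leq t_2 \leq s$, and since the hypothesis $q \geq 2t_2$ gives $t_2 \leq q/2$, hence $t_2 \leq \lfloor q/2 \rfloor = s$, this is fine. Second, choose any $2s$ distinct elements $r_1, \ldots, r_s, c_1, \ldots, c_s \in \eff_q$ (possible because $2s \leq q$) and form the Cauchy matrix $M = (m_{ij})$ with $m_{ij} = 1/(r_i - c_j)$. Third, recall that every square submatrix of a Cauchy matrix is invertible; in particular every $t \times t$ submatrix is invertible for every $t$ with $1 \leq t \leq s$, and a fortiori for every $t$ in the range $t_1 \leq t \leq t_2$. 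Fourth, apply the characterization theorem for linear range AONTs (the unnumbered theorem just before Example \ref{range3}): since $M$ is invertible and all $t \times t$ submatrices of $M$ are invertible for $t_1 \leq t \leq t_2$, the map $\yy = \xx M^{-1}$ is a linear $([t_1,t_2], s, q)$-rangeAONT. This witnesses $s = \lfloor q/2 \rfloor \in \mathcal{S}_R([t_1,t_2], q)$.

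There is really no hard step here; the lemma is essentially a parameter-bookkeeping consequence of the Cauchy construction that the authors discuss just above. The only thing that warrants a line of care is confirming the inequality chain $t_2 \leq \lfloor q/2 \rfloor$ from $q \geq 2t_2$, which uses that $t_2$ is an integer so that $t_2 \leq q/2$ forces $t_2 \leq \lfloor q/2 \rfloor$. Everything else is a direct appeal to previously established facts.
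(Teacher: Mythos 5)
Your proposal is correct and follows exactly the paper's argument: the paper's proof is a one-line appeal to the Cauchy matrix construction, and you simply spell out the parameter check $t_2 \leq \lfloor q/2 \rfloor$ and the invocation of the submatrix-invertibility characterization, both of which are sound. No differences of substance.
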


\begin{proof}
Cauchy matrices yield linear $([t_1,t_2], \lfloor \frac{q}{2}\rfloor, q)$-rangeAONTs, for all $t_1$ and $t_2$ such that $1\le t_1\le t_2\le \lfloor \frac{q}{2}\rfloor$.
\end{proof}

\begin{Lemma}
\label{minus1.lem}
If $s \in \mathcal{S}_R([t_1,t_2],q)$ and $s > t_2$, then $s-1 \in \mathcal{S}_R([t_1,t_2],q)$.
\end{Lemma}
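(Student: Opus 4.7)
The plan is to exhibit a linear $([t_1,t_2], s-1,q)$-rangeAONT directly by producing a suitable $(s-1)$ by $(s-1)$ matrix $M'$. By the hypothesis $s \in \mathcal{S}_R([t_1,t_2],q)$ and the theorem characterizing linear rangeAONTs, there is an invertible $s$ by $s$ matrix $M$ over $\eff_q$ such that every $t$ by $t$ submatrix of $M$ is invertible for every $t$ with $t_1 \le t \le t_2$. My candidate for $M'$ will be an $(s-1)$ by $(s-1)$ submatrix of $M$ obtained by deleting one row and one column.

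The first step is to ensure that some such submatrix is itself invertible. Note that this is \emph{not} covered by the range $[t_1,t_2]$ since $s-1 \ge t_2$ may be a strict inequality, so we need to argue invertibility separately. But this is a standard linear algebra fact: since $M$ is invertible, it has rank $s$, so not every $(s-1)$ by $(s-1)$ minor of $M$ can vanish (otherwise $M$ would have rank at most $s-2$). Equivalently, one may cite the cofactor expansion, or the identity $M\,\mathrm{adj}(M) = \det(M)\,I$, to conclude that there exist indices $i,j$ such that the minor $M_{i,j}$ obtained by deleting row $i$ and column $j$ is invertible. Let $M' := M_{i,j}$.

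The second step is to verify that every $t$ by $t$ submatrix of $M'$ is invertible for all $t$ with $t_1 \le t \le t_2$. This is automatic: any $t$ by $t$ submatrix of $M'$ is also a $t$ by $t$ submatrix of $M$, and by hypothesis every such submatrix of $M$ is invertible. The hypothesis $s > t_2$ guarantees that $s - 1 \ge t_2$, so the range $[t_1,t_2]$ is a legitimate range of values for an $(s-1)$-dimensional rangeAONT.

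Combining these two steps with the characterization theorem for linear rangeAONTs, we conclude that $\yy = \xx (M')^{-1}$ defines a linear $([t_1,t_2], s-1, q)$-rangeAONT, whence $s-1 \in \mathcal{S}_R([t_1,t_2],q)$. The whole argument is essentially a one-liner; the only step requiring any genuine content is the existence of an invertible $(s-1)$ by $(s-1)$ minor of $M$, and that is immediate from $\det(M) \neq 0$.
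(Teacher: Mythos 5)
Your proof is correct and is essentially the same argument the paper relies on: the paper's ``proof'' is just a citation to Theorem 20 of the EGS paper, whose argument is exactly this --- delete a row and column corresponding to a nonzero cofactor of $M$ to keep invertibility, note that every $t \times t$ submatrix of the resulting $(s-1) \times (s-1)$ matrix is a submatrix of $M$, and use the hypothesis $s > t_2$ so that $[t_1,t_2]$ is still an admissible range. No gaps; your cofactor/adjugate justification of the invertible $(s-1) \times (s-1)$ minor is the one genuine step and it is handled correctly.
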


\begin{proof}
The proof is identical to \cite[Theorem 20]{EGS}.
\end{proof}

\begin{Lemma}
\label{upper.lem}
If $s \in \mathcal{S}_R([t_1,t_2],q)$  then $s \leq \max \{q+t_1 - 1, t_1 + 1\}$.
\end{Lemma}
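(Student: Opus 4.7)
The plan is to use only the weakest part of the rangeAONT hypothesis (the $t=t_1$ slice), namely that every $t_1\times t_1$ submatrix of the defining matrix is invertible, and convert this into an upper bound by a standard projection-to-$\mathrm{PG}(1,q)$ argument.

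By Theorem~\ref{submatrix.thm}, a linear $([t_1,t_2],s,q)$-rangeAONT is an invertible $s\times s$ matrix $M$ over $\eff_q$ all of whose $t\times t$ submatrices are invertible for every $t \in [t_1,t_2]$; I will use only the case $t=t_1$. If $s\leq t_1+1$ the claimed bound is immediate, so I may assume $s\geq t_1+2$ and aim for the sharper inequality $s\leq q+t_1-1$. Fix any $t_1$ rows of $M$ and let $B$ be the resulting $t_1\times s$ matrix; the hypothesis then says that every $t_1$ columns of $B$ are linearly independent in $\eff_q^{t_1}$, so $B$ is the generator matrix of an $[s,t_1]_q$ MDS code.

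Next I would pick any $t_1-2$ columns of $B$ (necessarily linearly independent), let $W\subset \eff_q^{t_1}$ be the $(t_1-2)$-dimensional subspace they span, and consider the quotient map $\pi\colon \eff_q^{t_1} \to \eff_q^{t_1}/W \cong \eff_q^2$. Pushing the remaining $s-t_1+2$ columns of $B$ through $\pi$ yields vectors in $\eff_q^2$ that are (i) all nonzero, because $\pi(c)=0$ would place $c\in W$ and produce a linearly dependent $t_1$-subset by adjoining the $t_1-2$ fixed columns and any further column, and (ii) pairwise linearly independent, because otherwise two remaining columns together with the fixed ones would fail to be a basis of $\eff_q^{t_1}$. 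Consequently they determine $s-t_1+2$ distinct points of $\mathrm{PG}(1,q)$, and since $|\mathrm{PG}(1,q)|=q+1$ one obtains $s-t_1+2 \leq q+1$, that is, $s \leq q+t_1-1$. Together with the trivial case this gives $s \leq \max\{q+t_1-1,\, t_1+1\}$.

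The main obstacle I anticipate is the behaviour at the boundary values of $t_1$. When $t_1=2$ the set of fixed columns is empty (so $W=\{0\}$ and $\pi$ is the identity); one has to verify directly that the $s$ columns of the $2\times s$ matrix $B$ are pairwise linearly independent in $\eff_q^2$, so that they form an arc in $\mathrm{PG}(1,q)$ and $s\leq q+1$ as required. When $t_1=1$ the codimension-two-flat step is degenerate and the bound has to come entirely from the $t_1+1$ branch of the $\max$; effectively, the inequality has genuine content only for $t_1\geq 2$. Apart from these edge cases, the argument is the standard codimension-two projection that underlies many MDS-type bounds and I expect it to go through without further difficulty.
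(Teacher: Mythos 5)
Your core argument is correct for $t_1 \geq 2$, and it is a genuinely different route from the paper's. The paper disposes of the lemma by citation: a linear $([t_1,t_2],s,q)$-rangeAONT is in particular a $(t_1,s,q)$-AONT, which by \cite[Theorem 23]{EGS} yields an OA$(t_1,s,q)$ of index one, and the Bush bound \cite[Theorem 24]{EGS} then gives $s \leq \max\{q+t_1-1,\,t_1+1\}$. You instead exploit linearity directly: any $t_1$ rows of $M$ give a $t_1\times s$ matrix whose $t_1$-column subsets are independent (an $[s,t_1]_q$ MDS generator matrix), and your projection through a codimension-two subspace onto $\mathrm{PG}(1,q)$ is a correct, self-contained proof of the length bound $s\leq q+t_1-1$; the verifications that the projected columns are nonzero and pairwise independent are exactly right, and $t_1=2$ is handled properly. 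In effect you reprove the (linear-case) Bush/MDS bound rather than invoke it; the paper's reduction would apply to nonlinear AONTs as well, while yours buys a self-contained argument at no real extra cost since $\mathcal{S}_R$ is defined via linear rangeAONTs.

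The genuine gap is your dismissal of $t_1=1$. When $t_1=1$ the right-hand side is $\max\{q,\,2\}=q$ (as $q\geq 2$), so the bound does \emph{not} ``come from the $t_1+1$ branch'' and it is not content-free; yet your argument, which deliberately uses only the $t=t_1$ slice, proves nothing there. The $t=1$ condition says only that $M$ is invertible with no zero entries, and such matrices exist with $s>q$ (e.g.\ $J+I$ over $\eff_3$ with $s=4$ is invertible and has no zero entry), so no bound of the form $s\leq q$ can be extracted from that slice; indeed the lemma as stated already fails for $t_1=t_2=1$. For $t_1=1$ and $t_2\geq 2$ the statement is true, but one must use the $t=2$ part of the hypothesis: your own arc argument applied at $t=2$ gives only $s\leq q+1$, and closing the gap to $s\leq q$ needs the stronger fact $M_R([2,2],q)\leq q$ of \cite[Theorem 14]{EGS} (the route taken in Theorem \ref{upperbound.thm}). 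In fairness, the paper's proof has the same blind spot, since the Bush bound is vacuous at strength $1$; still, to make your write-up a proof of the stated lemma you should either restrict to $t_1\geq 2$ or treat $t_1=1$ via the $t\geq 2$ slice rather than waving it off.
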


\begin{proof}
Suppose $s \in \mathcal{S}_R([t_1,t_2],q)$. Then there exists a $(t_1, s,q)$-AONT. From \cite[Theorem 23]{EGS}, there exists an OA$(t_1,s,v)$. Now apply the Bush bound for orthogonal arrays   (e.g., see \cite[Theorem 24]{EGS}).
\end{proof}

Suppose $q \geq 2t_2$. In view of Lemmas \ref{lower.lem}--\ref{upper.lem}, each set $\mathcal{S}_R([t_1,t_2],q)$ is nonempty and contains a maximum element, which we denote by $M_R([t_1,t_2],q)$. Moreover, $\mathcal{S}_R([t_1,t_2],q)$ contains  all positive integers $s$ such that $t_2\leq s \leq M_R([t_1,t_2],q)$.

The papers \cite{EGS} and \cite{Linear2pp} have studied $M_R([2,2],q)$. We now record some results on $M_R([1,2],q)$ that can be inferred from results in these papers. Note that $M_R([1,2],q)$ is simply the largest value of $s$ such that a linear $(2,s,q)$-strong AONT exists. 

\begin{Theorem}
\label{upperbound.thm}
For any  prime power $q > 2$, $M_R([1,2],q) \leq q-1$.
\end{Theorem}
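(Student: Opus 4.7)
The plan is to use the linear characterization of rangeAONTs to translate the condition into a statement about an $s \times s$ matrix $M$ over $\eff_q$, then use a simple ratio-counting argument on the columns of $M$ to force $s \leq q-1$.

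First I would invoke the characterization of linear $([1,2],s,q)$-rangeAONTs (i.e., of linear $(2,s,q)$-strong AONTs): if $\yy = \xx M^{-1}$ defines such an AONT, then by Theorem \ref{submatrix.thm} applied with $t=1$ and with $t=2$, every $1 \times 1$ submatrix of $M$ is invertible (so every entry of $M$ is nonzero) and every $2 \times 2$ submatrix of $M$ is invertible. Equivalently, for all row indices $k \ne \ell$ and all column indices $i \ne j$, we have $m_{ki}, m_{kj}, m_{\ell i}, m_{\ell j} \in \eff_q^{*}$ and $m_{ki} m_{\ell j} \ne m_{kj} m_{\ell i}$.

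Next I would fix any two distinct columns, say columns $1$ and $2$, and for each row index $k \in \{1,\dots,s\}$ define the ratio
\[
r_k = \frac{m_{k1}}{m_{k2}} \in \eff_q^{*}.
\]
The $2 \times 2$ invertibility condition applied to rows $k,\ell$ in columns $1,2$ gives $m_{k1} m_{\ell 2} \ne m_{\ell 1} m_{k2}$, which, since all entries are nonzero, rearranges to $r_k \ne r_\ell$. Hence $k \mapsto r_k$ is an injection from the $s$-element set of row indices into $\eff_q^{*}$, a set of size $q-1$, yielding $s \le q-1$.

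There is no real obstacle here; the only thing to note is that just applying the Bush-type bound via Lemma \ref{upper.lem} with $t_1 = 1$ would give only $s \le q$, so we genuinely need to exploit the additional $2 \times 2$ nondegeneracy condition coming from $t_2 = 2$, which is exactly what the ratio argument does. The hypothesis $q > 2$ is only used to ensure that the bound $q-1$ is consistent with needing $s \ge t_2 = 2$ for the statement to be nonvacuous.
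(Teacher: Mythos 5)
Your argument is correct, but it is a genuinely different route from the paper's. The paper proves the bound by citing two results from \cite{EGS}: first $M_R([2,2],q)\le q$, and then the fact that in any linear $(2,q,q)$-AONT the matrix $M$ must contain $0$ entries, so the case $s=q$ cannot also satisfy the $1$-AONT condition, leaving $s\le q-1$. You instead give a self-contained two-column ratio argument: since a linear $([1,2],s,q)$-rangeAONT is simultaneously a $(1,s,q)$- and a $(2,s,q)$-AONT, Theorem \ref{submatrix.thm} forces all entries of $M$ to be nonzero and all $2\times 2$ submatrices to be invertible, and then the ratios $r_k=m_{k1}/m_{k2}$ are pairwise distinct elements of $\eff_q^{*}$, giving $s\le q-1$ directly. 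What your approach buys is independence from the nontrivial external results (you never need the intermediate bound $s\le q$), and it is in the same spirit as the paper's own ratio arguments in Theorems \ref{upperbound2.thm} and \ref{equiv.thm}, just applied across two columns rather than two rows; what the paper's approach buys is brevity given the existing literature. Your closing remarks are also accurate: Lemma \ref{upper.lem} with $t_1=1$ only yields $s\le q$, so the $2\times 2$ condition is genuinely needed, and $q>2$ serves only to keep the statement nonvacuous (indeed your injection argument is valid for every prime power $q$).
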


\begin{proof}
Suppose $q>2$ is  prime power. It is shown in \cite[Theorem 14]{EGS} that $M_R([2,2],q) \leq q$, so it immediately follows that  $M_R([1,2],q) \leq q$. Further, in any $(2,q,q)$-AONT, say $\yy =\xx M^{-1}$,  $M$ contains 0 entries, so the AONT cannot be a $1$-AONT (see \cite[Lemma 14]{EGS}). Hence, $M_R([1,2],q) \leq q-1$.
\end{proof}

We now observe that the upper bound of Theorem \ref{upperbound.thm} can be met whenever $q-1$ is a Mersenne prime.

\begin{Theorem}
\label{mersenne.thm}
Suppose $2^n-1$ is a prime. Then $M_R([1,2],2^n) = 2^n-1$.
\end{Theorem}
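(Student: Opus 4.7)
The plan is to match the upper bound in Theorem \ref{upperbound.thm} with an explicit construction of a linear $(2, 2^n-1, 2^n)$-strong AONT, so the task reduces to producing an invertible $(2^n - 1) \times (2^n - 1)$ matrix $M$ over $\eff_{2^n}$ all of whose entries and all of whose $2 \times 2$ minors are nonzero; by Theorem \ref{submatrix.thm} applied with $t = 1$ and $t = 2$, any such $M$ yields the desired strong AONT via $\yy = \xx M^{-1}$.

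The candidate I would try is a Vandermonde-type matrix that exploits the Mersenne primality of $p = 2^n - 1$. Set $q = 2^n$, let $g$ be a primitive element of $\eff_q$ (so that $g$ has prime order $p$ in the multiplicative group), and define $M$ to be the $p \times p$ matrix with $M_{ij} = g^{ij}$ for $0 \leq i, j \leq p - 1$. Every entry is a power of $g$, hence nonzero; and writing $\alpha_i = g^i$, the $i$-th row is $(\alpha_i^0, \alpha_i^1, \dots, \alpha_i^{p-1})$, so $M$ is a Vandermonde matrix whose $p$ nodes $1, g, \dots, g^{p-1}$ are exactly the distinct nonzero elements of $\eff_q$. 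In particular $M$ is invertible.

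The main step is checking that every $2 \times 2$ minor is invertible. For rows $i \neq i'$ and columns $j \neq j'$, a short calculation gives
\[
g^{ij} g^{i'j'} - g^{ij'} g^{i'j} \;=\; g^{ij'+i'j}\bigl(g^{(i-i')(j-j')} - 1\bigr),
\]
which is zero if and only if $p \mid (i-i')(j-j')$. This is exactly where the Mersenne hypothesis is decisive: since $p$ is prime and both $|i - i'|$ and $|j - j'|$ lie strictly between $0$ and $p$, neither factor is a multiple of $p$, so by primality the product is not either, and the minor is nonzero.

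Combining the construction with Theorem \ref{upperbound.thm} yields $M_R([1,2], 2^n) = 2^n - 1$. The only real content is the minor identity above and its use of the primality of $p$; invertibility of $M$ and the nonzero-entry condition come essentially for free from the Vandermonde structure, and without primality the same identity would allow small factors of $p$ to produce a vanishing minor, which is what limits more naive constructions.
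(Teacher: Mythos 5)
Your proposal is correct and follows essentially the same route as the paper: the paper also matches the bound of Theorem \ref{upperbound.thm} with a Vandermonde matrix over $\eff_{2^n}$, citing \cite[Theorem 11]{EGS} for the $2$-AONT property and noting the nonzero entries give the $1$-AONT property. The only difference is that you verify the $2\times 2$ minor condition directly (your identity and the use of the primality of $2^n-1$ is exactly the argument behind the cited result), making the proof self-contained rather than by citation.
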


\begin{proof}
In \cite[Theorem 11]{EGS}, it is shown that the transformation $\yy =\xx M^{-1}$ is a $(2,2^n-1,2^n)$-AONT if $M$ is a Vandermonde matrix defined over $\eff_{2^n}$ and $2^n-1$ is prime. Since a Vandermonde matrix does not contain 0 entries, this transformation is also a $(1,2^n-1,2^n)$-AONT.
Hence $M_R([1,2],2^n) \geq 2^n-1$. We also have $M_R([1,2],2^n) \leq 2^n-1$ from Theorem \ref{upperbound.thm}.
\end{proof}

We have the following improvement of Theorem \ref{upperbound.thm} when $q >3$ is odd.

\begin{Theorem}
\label{upperbound2.thm}
For any odd prime power $q > 3$, $M_R([1,2],q) \leq q-2$.
\end{Theorem}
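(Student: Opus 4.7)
The plan is to argue by contradiction: assume a linear $(2,q-1,q)$-strong AONT exists, defined by a $(q-1) \times (q-1)$ matrix $M$ over $\mathbb{F}_q$ with no zero entries and all $2\times 2$ submatrices invertible, and derive a contradiction from the parity of $q$ together with a product-of-ratios computation.

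First, I would fix any two distinct rows $i, j$ of $M$ and consider the sequence of ratios $r_k = M_{ik}/M_{jk}$ for $k = 1, \ldots, q-1$. Each $r_k$ lies in $\mathbb{F}_q^{*}$ because $M$ has nonzero entries, and the invertibility of the $2 \times 2$ submatrix on rows $i,j$ and columns $k,\ell$ says exactly $M_{ik}M_{j\ell} \neq M_{i\ell}M_{jk}$, i.e.\ $r_k \neq r_\ell$ for $k \neq \ell$. Thus the $q-1$ ratios are a permutation of $\mathbb{F}_q^{*}$.

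Next, I would take the product over all columns. On one hand $\prod_{k=1}^{q-1} r_k = \prod_{a \in \mathbb{F}_q^{*}} a = -1$ (the standard fact that the product of all nonzero field elements equals $-1$, obtained by evaluating $x^{q-1}-1 = \prod_{a}(x-a)$ at $x=0$). On the other hand, writing $P_i := \prod_{k=1}^{q-1} M_{ik}$, the product of ratios equals $P_i / P_j$. Hence $P_i = -P_j$ for every pair of distinct rows $i,j$.

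Finally, since $q > 3$ is an odd prime power we have $q \geq 5$, so $M$ has at least $q - 1 \geq 4 \geq 3$ rows. Picking three distinct rows $i, j, \ell$ gives $P_i = -P_j = P_\ell$ and also $P_i = -P_\ell$, so $2 P_\ell = 0$. Because $q$ is odd, $2$ is invertible in $\mathbb{F}_q$, forcing $P_\ell = 0$, which contradicts the fact that all entries of row $\ell$ of $M$ are nonzero. Therefore no such $M$ exists, proving $M_R([1,2],q) \le q-2$. I do not anticipate a real obstacle here — the only subtle point is verifying that the row count is at least three, which is precisely where the hypothesis $q > 3$ enters (alongside odd $q$, which is needed so that $2 \neq 0$).
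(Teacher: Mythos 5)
Your proof is correct and follows essentially the same route as the paper's: both arguments rest on the observation that $2\times 2$ invertibility forces the column ratios of two rows to be $q-1$ distinct nonzero elements, hence all of $\eff_q^*$ with product $-1$, and both extract the contradiction from three rows (you simply skip the paper's normalization of the first row to all $1$'s and phrase the contradiction as $2P_\ell=0$ instead of as a repeated ratio forcing a singular $2\times 2$ submatrix). One small point: to pass from the nonexistence of a linear $(2,q-1,q)$-strong AONT to the stated bound $M_R([1,2],q)\le q-2$ you should invoke the prior bound $M_R([1,2],q)\le q-1$ (Theorem \ref{upperbound.thm}) or Lemma \ref{minus1.lem}, exactly as the paper does.
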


\begin{proof} Suppose $q>3$ is an odd prime power. In view of Theorem \ref{upperbound.thm}, we only need to show that a linear $(2, q-1, q)$-strong AONT does not exist. Suppose that $\yy =\xx M^{-1}$ is a $(2,q-1,q)$-strong AONT, where $M$ is a $q-1$ by $q-1$ matrix over $\eff_q$. We can assume that the first row of $M$ consists of $1$ entries. Consider the second and third rows of  $M$ ($M$ has at least three rows because $q >3$). Denote the entries in these rows, from left to right, by $a_1, a_2, \dots , a_{q-1}$  and 
$b_1, b_2, \dots , b_{q-1}$, resp. The $a_i$'s comprise all the nonzero elements of $\eff_q$, as do the $b_i$'s. 

Now, because $q$ is odd, the product of the $a_i$'s is $-1$ and the product of the $b_i$'s is also  $-1$. For $1 \leq i \leq q-1$, define $c_i = a_i /b_i$. Then the product of the $c_i$'s is $1$.  
If the $c_i$'s were all distinct, their product would be $-1 \neq 1$. 
Therefore there exist distinct indices $i$ and $j$ such that $c_i = c_j$. Hence $a_i b_j = a_j b_i$ and the corresponding $2$ by $2$ submatrix 
\[
\left(
\begin{array}{ll}
a_i	&a_j\\
b_i	&b_j 
\end{array}
\right)
\]
of $M$ is not invertible. This is a contradiction.
\end{proof}

\begin{Theorem}
$M_R([1,2],3) = 2$, $M_R([1,2],5) =3$, $M_R([1,2],7) =5$ and $6\leq M_R([1,2],9) \leq 7$.
\end{Theorem}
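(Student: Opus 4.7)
The plan is to combine the upper bounds already established in Theorems \ref{upperbound.thm} and \ref{upperbound2.thm} with the explicit matrices exhibited in Examples \ref{range3}, \ref{range5}, \ref{range7}, and \ref{range9}. Each of these examples displays an invertible matrix over the relevant $\eff_q$ with no zero entries and all $2\times 2$ submatrices invertible, hence each defines a linear $(2,s,q)$-strong AONT by Lemma/Theorem-style characterizations already given (Theorem \ref{submatrix.thm} together with the fact that a strong AONT just adds the condition that entries are nonzero, i.e., that all $1\times 1$ submatrices are invertible).

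For $q=3$ I would invoke Theorem \ref{upperbound.thm} (which requires only $q>2$) to get $M_R([1,2],3)\le 2$; Example \ref{range3} then gives the matching lower bound. Note that Theorem \ref{upperbound2.thm} cannot be used here since it requires $q>3$. For $q=5$, $q=7$, and $q=9$ I would apply Theorem \ref{upperbound2.thm} (each of these is an odd prime power with $q>3$), yielding $M_R([1,2],q)\le q-2$, giving upper bounds $3$, $5$, and $7$ respectively. Examples \ref{range5}, \ref{range7}, and \ref{range9} realize the corresponding lower bounds $3$, $5$, and $6$, and these match the upper bounds in the first two cases while leaving a gap of one in the case $q=9$.

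The only thing that remains to be justified is that each listed matrix genuinely satisfies the two conditions required for a linear $(2,s,q)$-strong AONT: (i) all entries are nonzero, and (ii) every $2\times 2$ submatrix is invertible. Condition (i) is visible by inspection. Condition (ii) is a routine but tedious verification: for Example \ref{range3} there is only one $2\times 2$ submatrix; for Examples \ref{range5} and \ref{range7} one checks $\binom{s}{2}^2$ determinants over $\eff_5$ and $\eff_7$; for Example \ref{range9} the check is done over $\eff_9=\zed_3[x]/(x^2+1)$. This verification is the only laborious step, and I would simply assert it (as the author does) rather than write it out, since the matrices are presented as constructed for precisely this purpose.

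The main obstacle is the gap at $q=9$: to close it one would either have to exhibit a linear $(2,7,9)$-strong AONT (a $7\times 7$ matrix over $\eff_9$ with nonzero entries and all $2\times 2$ submatrices invertible) or prove that none exists, strengthening Theorem \ref{upperbound2.thm} in the case $q=9$. Neither is offered by the arguments already in the excerpt, so the result is stated with the inequality $6\le M_R([1,2],9)\le 7$ rather than an equality.
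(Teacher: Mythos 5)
Your proposal is correct and follows exactly the paper's argument: lower bounds from Examples \ref{range3}--\ref{range9}, upper bounds from Theorem \ref{upperbound.thm} for $q=3$ and Theorem \ref{upperbound2.thm} for $q=5,7,9$, with the resulting gap at $q=9$ acknowledged rather than closed. The additional remarks about verifying the examples and the need to avoid Theorem \ref{upperbound2.thm} at $q=3$ are accurate but do not change the route.
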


\begin{proof}
The lower bounds follow from Examples \ref{range3}--\ref{range9}. The upper bounds follow from 
Theorem \ref{upperbound.thm} (for $q=3$) and Theorem \ref{upperbound2.thm} (for $q>3$).
\end{proof}

Some of the above results can be interpreted in terms of difference matrices.
Let $G$ be an abelian group of order $g$, written additively,  let $2 \leq k \leq g$ and let $\lambda \geq 1$. Then a \emph{$(g,k; \lambda)$-difference matrix} is a $k$ by $g \lambda $ matrix $D= (d_{i,j})$ of entries from $G$ such that, for any two distinct rows $i$ and $j$ of $D$, the multiset 
\[ \{d_{i,k} - d_{j,k} : 1 \leq k \leq g \} \]
contains  every  element of $G$ exactly $\lambda $ times.

\begin{Theorem}
\label{equiv.thm}
Suppose $q$ is a prime power. If a  $(2,q-1,q)$-strong AONT exists, then a $(q-1,q-1; 1)$-difference matrix with entries from $\zed_{q-1}$ exists.
\end{Theorem}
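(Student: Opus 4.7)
The plan is to convert the defining matrix $M$ of a linear $(2,q-1,q)$-strong AONT into a difference matrix by taking the discrete logarithm entry-wise. By Theorem \ref{submatrix.thm} (applied with $t = 1$ and with $t = 2$), the hypothesis supplies an invertible $(q-1) \times (q-1)$ matrix $M = (m_{ij})$ over $\eff_q$ in which every entry is nonzero and every $2 \times 2$ submatrix is invertible.

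First I would fix a primitive element $\omega$ of $\eff_q$ and define a $(q-1)\times(q-1)$ array $D = (d_{ij})$ over $\zed_{q-1}$ by $d_{ij} = \log_\omega m_{ij}$. This is well defined because each $m_{ij}$ is nonzero, and the dimensions of $D$ already match those required of a $(q-1,q-1;1)$-difference matrix: $q-1$ rows and $(q-1)\lambda = q-1$ columns of entries from a group of order $q-1$.

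Next I would verify the difference property. Fix any two distinct rows $i$ and $j$. For any two distinct columns $k$ and $l$, invertibility of the corresponding $2\times 2$ submatrix of $M$ is equivalent to $m_{ik}m_{jl} \neq m_{il}m_{jk}$, and hence to $m_{ik}/m_{jk} \neq m_{il}/m_{jl}$. Thus the $q-1$ ratios $m_{ik}/m_{jk}$ (as $k$ ranges over all columns) are pairwise distinct elements of $\eff_q^*$ and therefore exhaust $\eff_q^*$. Because $\log_\omega$ is a group isomorphism $\eff_q^* \to \zed_{q-1}$, the multiset $\{d_{ik} - d_{jk} : 1 \leq k \leq q-1\}$ equals $\zed_{q-1}$ with each element occurring exactly once, which is precisely the $\lambda = 1$ difference matrix condition.

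There is no real obstacle here: the single substantive step is translating "every $2\times 2$ submatrix of $M$ is invertible" into "the column-wise ratios of any two rows are pairwise distinct," a one-line calculation, after which the passage from the multiplicative group $\eff_q^*$ to the additive group $\zed_{q-1}$ via $\log_\omega$ is canonical once $\omega$ is chosen.
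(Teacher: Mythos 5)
Your proposal is correct and follows essentially the same route as the paper: fix a primitive element, take entry-wise discrete logarithms of the AONT matrix $M$, and translate invertibility of every $2\times 2$ submatrix into distinctness of the column-wise ratios of any two rows. The only difference is cosmetic---the paper argues by contradiction (a repeated difference would produce a singular $2\times 2$ submatrix), while you verify directly that the $q-1$ pairwise distinct ratios exhaust $\eff_q^*$ and hence the log-differences cover $\zed_{q-1}$ exactly once.
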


\begin{proof}
Suppose  that $\yy =\xx M^{-1}$ is a $(2,q-1,q)$-strong AONT, where $M = (m_{i,j})$ is a $q-1$ by $q-1$ matrix over $\eff_q$. $M$ contains no  0 entries and all of its $2$ by $2$ submatrices are invertible. Fix a primitive element $\alpha \in (\eff_q)^*$.  Every entry  $m_{i,j}$ of $M$ can be written uniquely as  $m_{i,j} = \alpha ^ {d_{i,j}}$, where $d_{i,j} \in \zed_{q-1}$. Define $D = (d_{i,j})$. We claim that $D$ is a $(q-1,q-1; 1)$-difference matrix with entries from $\zed_{q-1}$.

Clearly $D$ has entries from $\zed_{q-1}$. Suppose that $D$ is not a difference matrix. Then there are two distinct rows $i$ and $j$ such that \[ d_{i,k} - d_{j,k} =  d_{i,\ell} - d_{j,\ell} \] for some $k \neq \ell$. Then 
\[   \frac{m_{i,k}}{m_{j,k}} =  \frac{m_{i,\ell} }{m_{j,\ell}} ,\] so the submatrix
\[
\left(
\begin{array}{ll}
m_{i,k}	&m_{i,\ell}\\
m_{j,k}	&m_{j,\ell} 
\end{array}
\right)
\]
of $M$ is not invertible. This is a contradiction.
\end{proof}

We can also prove a partial converse to Theorem \ref{equiv.thm}.

\begin{Theorem}
\label{converse.thm}
Suppose $q$ is a prime power and suppose a $(q-1,q-1; 1)$-difference matrix with entries from $\zed_{q-1}$ exists. Then there is a $q-1$ by $q-1$ matrix $M$ with entries from $\eff_q$, such that all $1$ by $1$ and all $2$ by $2$ submatrices are invertible.
\end{Theorem}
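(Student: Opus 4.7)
The plan is to invert the construction used in the proof of Theorem \ref{equiv.thm}. Given a $(q-1,q-1;1)$-difference matrix $D = (d_{i,j})$ over $\zed_{q-1}$, fix a primitive element $\alpha \in (\eff_q)^*$ and define an $s = q-1$ by $s$ matrix $M = (m_{i,j})$ by $m_{i,j} = \alpha^{d_{i,j}}$. I then check that both submatrix conditions follow directly from the definition of a difference matrix.

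First I would verify the $1$ by $1$ condition: since $\alpha$ is nonzero and every $m_{i,j}$ is a power of $\alpha$, no entry of $M$ is $0$, so every $1$ by $1$ submatrix is invertible. Next I would address the $2$ by $2$ condition. Pick any two distinct rows $i,j$ and any two distinct columns $k,\ell$. The submatrix is singular precisely when $m_{i,k}m_{j,\ell} = m_{i,\ell}m_{j,k}$, i.e., when $\alpha^{d_{i,k}+d_{j,\ell}} = \alpha^{d_{i,\ell}+d_{j,k}}$, which (since $\alpha$ has order $q-1$ in $(\eff_q)^*$) amounts to the congruence
\[
d_{i,k} - d_{j,k} \equiv d_{i,\ell} - d_{j,\ell} \pmod{q-1}.
\]

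The heart of the argument is that the difference matrix property forbids this. Because $\lambda = 1$ and $|\zed_{q-1}| = q-1$, the multiset $\{d_{i,m} - d_{j,m} : 1 \leq m \leq q-1\}$ is exactly $\zed_{q-1}$, with each element appearing once; hence the $q-1$ values $d_{i,m} - d_{j,m}$ are pairwise distinct as $m$ varies. In particular, for $k \neq \ell$ we have $d_{i,k} - d_{j,k} \not\equiv d_{i,\ell} - d_{j,\ell} \pmod{q-1}$, so the submatrix above is invertible.

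I do not expect any serious obstacle, since the proof is essentially the reverse direction of Theorem \ref{equiv.thm}: the map $d \mapsto \alpha^d$ from $\zed_{q-1}$ to $(\eff_q)^*$ is a group isomorphism, and it converts the ``all row-pair differences distinct'' condition of a $(q-1,q-1;1)$-difference matrix into the nonsingularity of all $2$ by $2$ submatrices of $M$. The only subtlety worth flagging is that this converse only recovers the nonzero-entry and $2$ by $2$-invertibility conditions; it does not assert that $M$ itself is invertible, so it stops short of producing a linear $(2,q-1,q)$-strong AONT — which is exactly why the statement is phrased as a \emph{partial} converse.
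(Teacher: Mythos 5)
Your proposal is correct and matches the paper's proof essentially verbatim: both define $m_{i,j} = \alpha^{d_{i,j}}$ for a primitive element $\alpha$, get the $1$ by $1$ condition from the entries being nonzero, and derive the $2$ by $2$ condition from the fact that a singular submatrix would force equal row-pair differences in $D$, contradicting $\lambda = 1$. Your closing caveat about $M$ not necessarily being invertible is exactly the point the paper makes in the remark following this theorem.
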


\begin{proof}
Suppose say $D = (d_{i,j})$ is a  $(q-1,q-1; 1)$-difference matrix with entries from $\zed_{q-1}$. Let $\alpha \in (\eff_q)^*$ be a primitive element and define 
$M = (m_{i,j})$ by the rule $m_{i,j} = \alpha ^ {d_{i,j}}$. $M$ is a $q-1$ by $q-1$ matrix over $\eff_q$. Clearly $M$ contains no $0$ entries, so all $1$ by $1$ submatrices are invertible. Suppose a submatrix \[
\left(
\begin{array}{ll}
m_{i,k}	&m_{i,\ell}\\
m_{j,k}	&m_{j,\ell} 
\end{array}
\right)
\]
is not invertible. Then we have
\begin{eqnarray*}
m_{i,k} m_{j,\ell}&=& m_{i,\ell} m_{j,k},\\
d_{i,k} + d_{j,\ell}&=& d_{i,\ell} + d_{j,k}, \text{and}\\
d_{i,k} - d_{j,k} &=& d_{i,\ell}- d_{j,\ell},
\end{eqnarray*}
so $D$ is not a $(q-1,q-1; 1)$-difference matrix. This is a contradiction.
\end{proof}

\begin{Remark}
{\rm
In Theorem \ref{converse.thm}, the matrix $M$ would not yield an AONT unless it is invertible.
}
\end{Remark}

\begin{Remark}
{\rm
In view of Theorem \ref{equiv.thm}, Theorem \ref{upperbound2.thm} can also be derived as a special case of 
\cite[Theorem 1.10]{Drake}, which states that a $(g, 3; 1)$-difference matrix over $\zed_g$ does not exist
if $g$ is even. 
}
\end{Remark}

\begin{Remark}
{\rm The existence or nonexistence of $(2,q-1,q)$-strong AONT is unknown when $q= 2^n$ and $q-1$ is not prime. Unfortunately, there are no currently known results on difference matrices that can help resolve these cases, either positively or negatively. It has been conjectured (e.g., see\cite[Conjecture 5.18, \S V.5.3]{CD}) that there is no 
$(g,g \lambda; \lambda)$-difference matrix over any group whose order is not a prime power, but this conjecture has not been proven. If it were proven, then the nonexistence of the above-mentioned AONTS would follow as a consequence of Theorem \ref{equiv.thm}.
}
\end{Remark}

\begin{Remark}
{\rm Suppose $2^n -1$ is prime. We can give an alternate proof of Theorem \ref{mersenne.thm} by starting with a particular $(2^n -1,2^n -1; 1)$-difference matrix, namely the multiplication table of $\zed_{2^n -1}$, and applying Theorem \ref{equiv.thm}. The resulting matrix $M$, being a Vandermonde matrix, is invertible, so it yields an AONT.
}
\end{Remark}

%\newpage

\section{Restricted AONTs}
\label{sec:restAONT}

Pham et al.\ \cite{PSC2019Optical} introduced  $R$-restricted AONTs. Their definition, restated in the language of unbiased arrays, is as follows: 

\begin{Definition}
Suppose $s$ is a positive integer and $R\subseteq \{1,2, \cdots, s\}$. An $R$-restricted AONT is  a 
$(v^s,2s,v)$ array that is unbiased with respect to the following sets of columns:
\begin{enumerate}
	\item $\{1, \dots , s\}$,
	\item $\{s+1, \dots , 2s\}$,
	\item $\{i\} \cup J$, for any sets $\{i\}$ and $J$ where $i \in \{1,\dots, s\}$, and $J  = \{s+1, \dots , 2s\}\setminus R^{\prime}$, where $R^{\prime}= \{r+s: r\in R\}$. (Note that we add $s$ to each element of $R$ to obtain $R'$, because $R'$ refers  to labels of  columns corresponding to outputs of the AONT.)
\end{enumerate}

\end{Definition}

%In Section \ref{Generalizations}, we generalize this variation of AONTs in terms of $t$ and also the security of the secure channel.

Pham et al.\ \cite{PSC2019Optical} use these $R$-restricted AONTs in a setting where there is an unconditionally secure communication channel, with a limited bandwidth, as well as a channel that can be observed by the adversary. In this setting, a portion of the message is sent through the secure channel, while the rest is transmitted over the regular one. Pham et al.\ \cite{PSC2019Optical} design the security of their system based on the adversary's lack of access to the portion of the message sent over the secure channel. They wish to guarantee that it is impossible for the adversary to gain any information about any one input block, in the absence of the blocks sent over the secure channel. That is, if the output blocks are all known except for the blocks in $R'$, then no information can be obtained about any specific input block.

%The generalization presented in this section is an obvious generalization of $\ell$-restricted AONTs (due to Pham et al.\ \cite{PSC2019Optical}) to $\ell$-restricted $t$-AONT. Pham et al.\ \cite{PSC2019Optical} introduced \emph{$R$-restricted AONTs} as a structure with fewer constraints than regular AONT. Therefore, they are more likely to exist. 

%In particular, if $R=\{1,2,\dots,\ell\}$, the AONT is called an \emph{$\ell$-restricted AONT}.
%An $R$-restricted AONT is \emph{linear} if each output is a linear combination of the inputs.

The above definition can be generalized and extended in various ways. One possible generalization 
considers the security of any $t$ input blocks, where $t\le |R|$, in the absence of all the output blocks sent over the secure channel. Our generalization is stronger; we consider the security of any $t\le |R|$ input blocks assuming that the adversary can learn all the output blocks except for $t$ of the blocks sent over the secure channel. (Of course, if there are exactly $t$ blocks sent over the secure channel, then the adversary is assumed to have access to none of them, and in this case the two generalizations are equivalent.)

Thus, we propose the following more general definition of an $R$-restricted $(t,s,v)$-AONT. This definition was first given in \cite{Thesis}.

\begin{Definition}
	\label{unbiased-rest.def}
	Suppose $s$ is a positive integer, $R\subseteq \{1,2, \dots, s\}$, and $t$ is an integer such that $1\le t\le |R|$. An \emph{$R$-restricted $(t,s,v)$-AONT} is a $(v^s,2s,v)$ array with columns, labeled $1,\dots, 2s$, that is unbiased with respect to the following sets of columns:
	\begin{enumerate}
		\item $\{1, \dots , s\}$,
		\item $\{s+1, \dots , 2s\}$,
		\item $I \cup J$, for any sets $I$ and $J$ where $I \subseteq \{1,\dots, s\}$, $|I| = t$,  $J \subseteq \{s+1, \dots , 2s\}$, $|J| = s-t$, and $|R^{\prime}\setminus J| = t$, where $R^{\prime}= \{r+s: r\in R\}$.
	\end{enumerate}
\end{Definition}

In other words, in an $R$-restricted $(t,s,v)$-AONT, fixing all the outputs---except for $t$  outputs in $R$---does not yield any information about any $t$ inputs.

%The following definition from 
%is a straightforward generalization of Theorem \cite[2.1]{ES21_OSAONT}.

% Using a similar proof to the proof of Theorem 2.1 in \cite{ES21_OSAONT}, it can be seen that Definitions \ref{Def:WSRestAONT} and \ref{unbiased-rest.def} are equivalent. In Definition \ref{Def:WSRestAONT}, if the third condition changes to $\mathsf{H}(\mathcal{X}\mid \mathcal{Y}) = \mathsf{H}(\mathcal{X})$, then the structure is a perfectly secure rectangular AONT. In this rest of this section, the combinatorial definition and its properties will be explored, and to avoid redundancy, we just use ``restricted AONT" to address them.

The following result is an immediate generalization of Theorem \ref{OAthenAONT}.

\begin{Theorem}
	\label{OAtorestAONT}
	Suppose there exists an OA$(s,2s,v)$. Let $R\subseteq \{1,2,\cdots,s\}$. 
	Then there exists an $R$-restricted $(t,s,v)$-AONT for all $t$, $1 \le t \le |R|$. 
\end{Theorem}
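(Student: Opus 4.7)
The plan is to simply verify each of the three unbiasedness conditions in Definition \ref{unbiased-rest.def} directly from the defining property of an orthogonal array, exactly as in the proofs of Theorem \ref{OAthenAONT} and Theorem \ref{OAtorecAONT}. Recall that an OA$(s,2s,v)$ is, by definition, a $(v^s, 2s, v)$-array that is unbiased with respect to \emph{every} subset of $s$ columns. So the strategy is to observe that each of the three kinds of column subsets appearing in Definition \ref{unbiased-rest.def} has size exactly $s$.

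First I would handle condition 1 and condition 2: these ask for unbiasedness with respect to $\{1, \dots, s\}$ and $\{s+1, \dots, 2s\}$, which are subsets of size $s$, so they are immediate from the OA property. Next, for condition 3, I would note that $I \subseteq \{1,\dots,s\}$ and $J \subseteq \{s+1,\dots,2s\}$ are disjoint (they lie in the two disjoint halves of the column set), and $|I| = t$ and $|J| = s-t$, so
\[
|I \cup J| = |I| + |J| = t + (s-t) = s.
\]
Hence $I \cup J$ is again a set of exactly $s$ columns, and the OA is unbiased with respect to it. The side condition $|R' \setminus J| = t$ merely selects which of the size-$s$ subsets $I \cup J$ we are required to check; it does not increase the cardinality, so it imposes no extra demand beyond what the OA already guarantees.

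There is essentially no obstacle here: the theorem is a direct specialization of the definition of an orthogonal array, and the additional restriction involving $R$ only \emph{shrinks} the family of subsets on which unbiasedness must be verified. The only thing worth recording carefully in the write-up is that the hypothesis $1 \le t \le |R|$ is automatically compatible with the existence of some $J$ with $|R' \setminus J| = t$, since such a $J$ can be constructed by removing any $t$ elements from $R'$ and then adjoining an arbitrary $s - t - (|R|-t) = s - |R|$ further columns from $\{s+1,\dots,2s\} \setminus R'$. This confirms that the third family of subsets is nonempty for all $t$ in the claimed range, so the conclusion of the theorem is nonvacuous.
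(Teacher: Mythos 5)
Your proof is correct and follows exactly the argument the paper intends: the paper states this result as an immediate generalization of Theorem \ref{OAtorecAONT} (via Theorem \ref{OAthenAONT}), relying on the fact that every column set required by Definition \ref{unbiased-rest.def} has size exactly $s$, so unbiasedness follows directly from the OA$(s,2s,v)$ property. Your added remark on the nonemptiness of the third family of subsets is a harmless (and correct) extra detail not needed for the theorem itself.
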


%\subsection{Linear Restricted AONTs}
Suppose $R = \{1,2,\dots, \ell\}$, where $\ell \ge t$. The following 
corollary describes the restricted AONT property in the matrix representation of \emph{linear} restricted AONTs.

\begin{Corollary}
	\label{Cor:LinRestAONT}
	Suppose that $q$ is a prime power, $t \le \ell$, and $M$ is an invertible $s$ by $s$ matrix with entries from $\eff_q$. Then the transformation $\yy =\xx M^{-1}$ is a $\{1,2,\dots,\ell\}$-restricted $(t,s,q)$-AONT if and only if all $t$ by $t$ submatrices of $M$ that are contained in the first $\ell$ rows of $M$ are invertible.
\end{Corollary}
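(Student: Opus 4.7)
The plan is to mirror the proof of Theorem \ref{submatrix.thm}, checking each of the three unbiasedness conditions of Definition \ref{unbiased-rest.def} against the matrix $M$. Since $M$ is invertible, the map $\xx \mapsto \yy = \xx M^{-1}$ is a bijection of $(\eff_q)^s$, so each $\xx$ occurs exactly once as an input and each $\yy$ occurs exactly once as an output. This immediately gives conditions (1) and (2) of Definition \ref{unbiased-rest.def}, leaving only condition (3) to analyze.

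Next I would unpack condition (3) for $R = \{1,\dots,\ell\}$ and $R' = \{s+1,\dots,s+\ell\}$. The conditions $J \subseteq \{s+1,\dots,2s\}$, $|J| = s-t$, and $|R' \setminus J| = t$ force $J$ to have the form $\{s+1,\dots,2s\} \setminus K$ for a uniquely determined $t$-subset $K \subseteq R'$. Setting $K_0 = \{k - s : k \in K\}$, the set $K_0$ ranges over all $t$-subsets of $\{1,\dots,\ell\}$. So condition (3) becomes: for every $t$-subset $I \subseteq \{1,\dots,s\}$ and every $t$-subset $K_0 \subseteq \{1,\dots,\ell\}$, the array is unbiased with respect to the columns $I \cup (\{s+1,\dots,2s\} \setminus K)$.

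I would then translate this into a matrix statement using $\xx = \yy M$. For $i \in I$,
\[
x_i = \sum_{k \in K_0} m_{ki}\,y_{s+k} + \sum_{j \in \{1,\dots,s\}\setminus K_0} m_{ji}\,y_{s+j}.
\]
Once the $s-t$ outputs indexed by $\{s+1,\dots,2s\}\setminus K$ are fixed, the second sum becomes a constant $c_i$, and as $(y_{s+k})_{k \in K_0}$ varies over $(\eff_q)^t$, the tuple $(x_i)_{i \in I}$ is an affine function of $(y_{s+k})_{k \in K_0}$ whose linear part is the $t \times t$ submatrix of $M$ with row set $K_0$ and column set $I$. Unbiasedness with respect to $I \cup (\{s+1,\dots,2s\}\setminus K)$ is precisely the statement that, for every fixed choice of known outputs, this affine map is a bijection of $(\eff_q)^t$, which is equivalent to invertibility of that $t \times t$ submatrix.

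Finally, as $I$ ranges over $t$-subsets of $\{1,\dots,s\}$ and $K_0$ ranges over $t$-subsets of $\{1,\dots,\ell\}$, this says exactly that every $t \times t$ submatrix of $M$ whose rows lie in $\{1,\dots,\ell\}$ is invertible, giving the stated equivalence. The only real obstacle is minor index bookkeeping, namely aligning the output-column labels $K \subseteq \{s+1,\dots,2s\}$ used in Definition \ref{unbiased-rest.def} with the row indices $K_0 \subseteq \{1,\dots,\ell\}$ in $M$ that arise from the relation $\xx = \yy M$; once that alignment is fixed, everything else reduces to the standard affine-map criterion already used in Theorem \ref{submatrix.thm}.
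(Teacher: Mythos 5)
Your proof is correct and follows exactly the route the paper intends: the paper states this as an unproved corollary of Definition \ref{unbiased-rest.def} and the submatrix criterion of Theorem \ref{submatrix.thm}/Corollary \ref{linear-st}, and your argument—bijection gives conditions (1)--(2), the constraint $|R'\setminus J|=t$ forces the unknown outputs into the first $\ell$ positions, and the affine-map criterion identifies unbiasedness with invertibility of the $t\times t$ submatrix on rows $K_0\subseteq\{1,\dots,\ell\}$ and columns $I$—is precisely that derivation. The only blemish is notational (writing $y_{s+k}$ for the output coordinate $y_k$ whose array column is $s+k$), which does not affect correctness.
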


This relaxation of conditions allows for restricted AONTs with parameters for which an AONT does not exist. For instance, it was shown in \cite{EGS}  that $(2,6,5)$-AONT and $(2,9,9)$-AONT do not exist. However, Examples \ref{Ex:RestAONT_1} and \ref{Ex:RestAONT_2} present $\{1,2\}$-restricted AONTs for the same values of $s$ and $q$. 

\begin{Example}
	\label{Ex:RestAONT_1}
			A linear $\{1,2\}$-restricted $(2,6,5)$-AONT:
	\[
	\left(
	\begin{array}{cccccc}
	0 & 1 & 1 & 1 & 1 & 1 \\
	1 & 0 & 1 & 2 & 3 & 4 \\
	0 & 0 & 1 & 0 & 0 & 0 \\
	0 & 0 & 0 & 1 & 0 & 0 \\
	0 & 0 & 0 & 0 & 1 & 0 \\
	0 & 0 & 0 & 0 & 0 & 1 \\
	\end{array}
	\right) .
	\]
\end{Example}

\begin{Example}
	\label{Ex:RestAONT_2}
			A linear $\{1,2\}$-restricted $(2,9,9)$-AONT:
	\[
	\left(
	\begin{array}{ccccccccc}
	0 & 1 & 1 & 1 & 1 & 1 & 1 & 1 & 1 \\
	1 & 0 & 1 & \alpha & \alpha^2 & \alpha^3 & \alpha^4 & \alpha^5 & \alpha^6 \\
	0 & 0 & 1 & 0 & 0 & 0 & 0 & 0 & 0 \\
	0 & 0 & 0 & 1 & 0 & 0 & 0 & 0 & 0 \\
	0 & 0 & 0 & 0 & 1 & 0 & 0 & 0 & 0 \\
	0 & 0 & 0 & 0 & 0 & 1 & 0 & 0 & 0 \\
	0 & 0 & 0 & 0 & 0 & 0 & 1 & 0 & 0 \\
	0 & 0 & 0 & 0 & 0 & 0 & 0 & 1 & 0 \\
	0 & 0 & 0 & 0 & 0 & 0 & 0 & 0 & 1 \\
	\end{array}
	\right) .
	\]
	
\end{Example}

When  $\ell = t$, Theorem \ref{Cor:LinRestAONT} requires that any $t$ columns of the $t$ by $s$ submatrix formed by the first $t$ rows of $M$ are linearly independent. To construct such a matrix, we can use the parity check matrix of a maximum distance separable (MDS) code. % (see  \cite{MacSl} for details). 
For example, triply extended Reed-Solomon codes can be used to construct $\{1,2,3\}$-restricted $(3,2^n+2,2^n)$-AONTs, as shown in Theorem \ref{Thrm:TriplyExtRS_AONT}. 

\begin{Theorem}
	\label{Thrm:TriplyExtRS_AONT} 
	Let $n$ be a positive integer and let $q = 2^n$. Then a $\{1,2,3\}$-restricted $(3,2^n+2,2^n)$-AONT exists.	
\end{Theorem}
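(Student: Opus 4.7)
The plan is to invoke Corollary \ref{Cor:LinRestAONT} with $\ell = t = 3$ and $s = 2^n + 2$, so I need to exhibit an invertible $(2^n+2) \times (2^n+2)$ matrix $M$ over $\mathbb{F}_{2^n}$ whose top three rows have the property that every $3 \times 3$ submatrix is invertible. The idea is to build $M$ in a block pattern analogous to Examples \ref{Ex:RestAONT_1} and \ref{Ex:RestAONT_2}: the top three rows will carry the ``MDS content'' from a triply-extended Reed-Solomon code, and the bottom $2^n - 1$ rows will simply be a shifted identity that guarantees invertibility of $M$ as a whole.

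First, I would construct the top three rows. Let $\alpha_1, \dots, \alpha_{2^n}$ be an enumeration of $\mathbb{F}_{2^n}$ and define the $3 \times (2^n+2)$ matrix
\[
G = \begin{pmatrix}
1 & 1 & \cdots & 1 & 0 & 0\\
\alpha_1 & \alpha_2 & \cdots & \alpha_{2^n} & 0 & 1\\
\alpha_1^{2} & \alpha_2^{2} & \cdots & \alpha_{2^n}^{2} & 1 & 0
\end{pmatrix}.
\]
This is the generator matrix of the triply-extended Reed-Solomon code; because the characteristic is $2$ (so that squaring is a field automorphism), every three columns of $G$ are linearly independent. Any $3 \times 3$ submatrix formed from the first $2^n$ columns is a Vandermonde matrix; the submatrices involving one or both of the final two columns reduce, after cofactor expansion, to $2\times 2$ or $1\times 1$ Vandermonde determinants in the remaining $\alpha_i$'s (this is where the automorphism $x \mapsto x^{2}$ is used), all nonzero. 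So every $3\times 3$ submatrix of $G$ is invertible.

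Next, I would assemble $M$ by placing $G$ on top and stacking below it the $(2^n-1) \times (2^n+2)$ block $\bigl(\, \mathbf{0} \mid I_{2^n - 1}\,\bigr)$, where the leading block of zeros has three columns and $I_{2^n-1}$ is the identity matrix indexed by the last $2^n - 1$ columns. Expanding $\det(M)$ along the last $2^n - 1$ rows leaves exactly the determinant of the $3 \times 3$ submatrix of $G$ consisting of its first three columns, which is nonzero by the MDS property just verified. Thus $M$ is invertible.

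Finally, Corollary \ref{Cor:LinRestAONT} applied with $t = \ell = 3$ says that $\yy = \xx M^{-1}$ is a $\{1,2,3\}$-restricted $(3, 2^n+2, 2^n)$-AONT precisely when every $3 \times 3$ submatrix of $M$ contained in the first three rows is invertible, which is the MDS property of $G$ we already established. The only nontrivial step is verifying that the three doubly/triply-extended columns really do maintain the MDS condition in characteristic $2$; this is a classical fact about Reed-Solomon codes but is the only place the hypothesis $q = 2^n$ is essential, and it is the main conceptual obstacle in the argument.
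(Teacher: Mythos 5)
Your proposal is correct and follows essentially the same route as the paper: the first three rows are (a column-permuted version of) the parity-check/generator matrix of the triply extended Reed--Solomon code, whose MDS property (any three columns independent, valid because $q$ is even) gives the restricted-AONT condition via Corollary \ref{Cor:LinRestAONT}, and the remaining $q-1$ rows are unit rows chosen so that the full square matrix is invertible. The only differences are cosmetic: you verify the MDS property by a direct determinant case analysis (correctly isolating the $\alpha_i^2\neq\alpha_j^2$ case where characteristic $2$ is needed) instead of citing MacWilliams--Sloane, and you place the padding unit rows in the last $q-1$ columns rather than the first ones.
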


\begin{proof}
	 Let $\omega_1, \omega_2,\dots, \omega_{q-1}$ be distinct elements in the finite field $\eff_{q}$. The following matrix 
	\[
	H=	\left(
	\begin{array}{ccccccc}
	1 & 1 & \cdots & 1 & 1 & 0 & 0 \\
	\omega_1 & \omega_2 & \cdots & \omega_{q-1} & 0 & 1 & 0  \\
	{\omega_1}^2 & {\omega_2}^2 & \cdots & {\omega_{q-1}}^2 & 0 & 0 & 1 \\
	\end{array}
	\right)
	\]
is the parity check matrix of a triply extended Reed-Solomon code over $\eff_{q}$ (see \cite[Ch.\ 11, Theorem 10]{MacSl}). This code has length $q+2$, dimension $q-1$ and minimum distance $4$, so any three columns of $H$ are linearly independent.
	To construct the AONT, we only need to  add $q-1$ additional rows in such a way that the resulting matrix is invertible. This goal can be achieved by choosing rows consisting of a single $1$ entry in column $i$ (for $1 \leq i \leq q-2$) and $0$'s elsewhere.
	The resulting matrix $M$ gives rise to a $\{1,2,3\}$-restricted $(3,q+2,q)$-AONT.
\end{proof}

\begin{Remark}
{\rm 
If we use the dual code of the code used in Theorem \ref{Thrm:TriplyExtRS_AONT}, we can also construct a $\{1,2,\dots, q-1\}$-restricted $(q-1,q+2,q)$-AONT.
}
\end{Remark}

Doubly extended Reed-Solomon codes can be utilized in the construction of $\{1,2,\dots, t\}$-restricted $(t,q+1,q)$-AONTs, as Theorem \ref{Thrm:DoublyExtRS_AONT} states. %The other $s-t$ rows of the matrix need to be chosen such that the entire matrix is invertible.

\begin{Theorem}
	\label{Thrm:DoublyExtRS_AONT}
	Let $q$ be a prime power and let $t\le q+1$. Then a $\{1,2,\dots, t\}$-restricted $(t,q+1,q)$-AONT exists.
\end{Theorem}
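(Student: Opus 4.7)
The plan is to mimic exactly the proof of Theorem \ref{Thrm:TriplyExtRS_AONT}, replacing the triply extended Reed--Solomon code by a doubly extended Reed--Solomon code of length $q+1$. The doubly extended Reed--Solomon code is a $[q+1,\,q+1-t,\,t+1]$ MDS code over $\eff_q$, so its parity check matrix $H$ is a $t$ by $q+1$ matrix over $\eff_q$ with the property that every $t$ columns are linearly independent. Explicitly, if $\omega_1,\dots,\omega_q$ is an enumeration of $\eff_q$, one can take
\[
H=\begin{pmatrix}
1 & 1 & \cdots & 1 & 0\\
\omega_1 & \omega_2 & \cdots & \omega_q & 0\\
\omega_1^{2} & \omega_2^{2} & \cdots & \omega_q^{2} & 0\\
\vdots & \vdots & & \vdots & \vdots\\
\omega_1^{t-1} & \omega_2^{t-1} & \cdots & \omega_q^{t-1} & 1
\end{pmatrix}.
\]

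Next, I invoke Corollary \ref{Cor:LinRestAONT} with $\ell=t$: the transformation $\yy=\xx M^{-1}$ is a $\{1,\dots,t\}$-restricted $(t,q+1,q)$-AONT if and only if $M$ is an invertible $(q+1)$ by $(q+1)$ matrix whose first $t$ rows have every $t$ by $t$ submatrix invertible. The first condition will be ensured by construction; the second condition is exactly the MDS property of $H$.

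To build $M$, I place $H$ in the first $t$ rows. Permuting columns if needed, I may assume the submatrix $A$ of $H$ on columns $1,\dots,t$ is invertible (this uses the MDS property). Then I append $q+1-t$ further rows, each being a standard basis vector: row $t+i$ is the vector with a single $1$ in column $t+i$ and zeros elsewhere, for $1\le i\le q+1-t$. The resulting matrix has block form
\[
M=\begin{pmatrix} A & B \\ 0 & I_{q+1-t}\end{pmatrix},
\]
so $\det M=\det A\neq 0$ and $M$ is invertible. The first $t$ rows of $M$ are exactly the rows of $H$ (possibly after the column permutation), so all of their $t$ by $t$ submatrices are invertible, and Corollary \ref{Cor:LinRestAONT} then delivers the required restricted AONT.

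There is no real obstacle here; the only thing to be careful about is that appending rows to $H$ might destroy invertibility of the whole matrix unless the appended rows are chosen in the column positions complementary to an invertible $t\times t$ block of $H$. The MDS property guarantees such a block exists, so the block-triangular construction above works uniformly for all $t\le q+1$.
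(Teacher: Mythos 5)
Your proposal is correct and follows essentially the same route as the paper's proof: take the parity check matrix of a doubly extended Reed--Solomon code (any $t$ columns linearly independent), extend it to an invertible $(q+1)\times(q+1)$ matrix, and conclude via the criterion of Corollary \ref{Cor:LinRestAONT}. The only difference is that you spell out the extension step explicitly (the block-triangular completion with standard basis rows after a harmless column permutation), which the paper simply asserts can be done.
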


\begin{proof}
	 For any value of $k$ such that $1 \leq k \leq q+1$, we can construct a doubly extended Reed-Solomon code of length $q+1$, dimension $k$ and distance $q -k+ 2$ (see \cite[Ch.\ 11, Theorem 9]{MacSl}). The parity check matrix of this code can be extended by $k$ rows such that the final matrix is invertible. Since any $q-k+1$ columns of the parity check matrix are linearly independent, the final matrix is a $\{1,2,\dots,t\}$-restricted $(t,q+1,q)$-AONT, where $t = q-k+1$.
\end{proof}

\begin{Remark}
{\rm Example \ref{Ex:RestAONT_1} is an application of Theorem \ref{Thrm:DoublyExtRS_AONT}.
}
\end{Remark}

\section{Conclusion}

In this paper, we have initiated a study of three generalizations and extensions of $(t,s,v)$-all-or-nothing transforms: rectangular, range, and restricted AONTs. It is worth noting that these properties are not necessarily mutually exclusive. An example of this is the combination of strong and rectangular AONTs that are used in the applications described by Oliveira et al.~\cite{OLVBM}. Constructions for most of combinations of these AONT properties have not been studied yet and could result in interesting outcomes both in theory and in application.

\end{document}